\title{Error estimation and uncertainty quantification for first time to a threshold value}
\author{ Jehanzeb H. Chaudhry \and Donald Estep \and Zachary Stevens \and Simon J. Tavener}
\date{\today}
\institute{J. H. Chaudhry \and Z. Stevens \at  The University of New Mexico, Albuquerque, NM 87131, USA. \email{jehanzeb@unm.edu, zstevens@unm.edu}
\and
S. J. Tavener \at Colorado State University,  Fort Collins, CO 80523, USA. \email{tavener@math.colostate.edu}
\and
D. Estep \at Simon Fraser University, Burnaby, BC V5A 1S6, Canada. \email{donald\_estep@sfu.ca}
}
\begin{document}
\maketitle

\begin{abstract}
Classical \emph{a posteriori} error analysis for differential equations quantifies the error in a Quantity of Interest (QoI) which is represented as a bounded linear functional of the solution. In this work we consider \emph{a posteriori} error estimates of a quantity of interest that cannot be represented in this fashion, namely the time at which a threshold is crossed for the first time. We derive two representations for such errors and use an adjoint-based \emph{a posteriori} approach to estimate unknown terms that appear in our representation. The first representation is based on linearizations using Taylor's Theorem. The second representation is obtained by implementing standard root-finding techniques. We provide several examples which demonstrate the accuracy of the methods. We then embed these error estimates within a framework to provide error bounds on a cumulative distribution function when the parameters of the differential equations are uncertain.

\keywords{ 	Initial value problems, Error bounds, Monte Carlo methods, Adjoint based error estimation, Uncertainty quantification}
\subclass{  65L05, 65L70, 65C05}
\end{abstract}

\section{Introduction} \label{sec:introduction}

There are many situations in which the purpose of a computation is to determine \emph{when} a functional of the solution to \eqref{eq:ivp} achieves a certain event, for example when a temperature or a species concentration reaches a specified level, the wave height of a tsunami crosses a threshold at a certain location, an orbiting body completes a revolution etc. In this article we perform \textit{a posteriori} analysis for the error in the computed value and computed probability distribution of the time at which a threshold value is realized for the first time in the context of ordinary differential equations (ODEs). More precisely, consider a system of first order ODEs
\begin{equation}
\label{eq:ivp}
\dot{y} = f(y,t; \theta), \; t \in (0,T],  \quad  y(0)= y_0,
\end{equation}
where $\dot{y} \equiv \frac{d y(t)}{dt}$, $f: \mathbb{R}^m \times \mathbb{R} \times \mathbb{R} \rightarrow \mathbb{R}^m $ is a Lipschitz continuous function and $\theta$ is a deterministic or random parameter. Let $S(y(t))$ be a linear functional of $y(t)$ and $Q(y)$ be the first time $t \in (0, T]$ at which a threshold $S(y(t)) = R $ is crossed. \textcolor{black}{We assume such a $t<T$ exists.} That is,

\begin{equation}
\label{eq:QoI}
Q(y) := \min\limits_{t \in (0,T]}  \text{arg}( S(y(t)) = R ).
\end{equation}
Hence, we refer to this as a non-standard QoI in the context of \textit{a posteriori} error analysis. An example of this non-standard QoI for the Lorenz system (see \S \ref{sec:lorenz_example_true_soln}) is illustrated in Figure~\ref{fig:Lorenz_true}.

Standard adjoint-based \emph{a posteriori} error analysis seeks to estimate the error in a quantity of interest (QoI) that can be expressed as a bounded functional of the solution and is widely used for a broad range of numerical methods~\cite{CET2014,AO2000,Ban,barth04,beckerrannacher,CaEsTa2008,Cao2004,CCS2017,CEG+2013,CEG+2015,CEGT13b,CET+2016,CET2015,Chaudhry17,Chaudhry2019,Chaudhry2019b,DEstep,eehj_actanum_95,ELM,estep_sinum_95,Estep2009,JCC+2015}.
In these cases, the error analysis involves computable residuals of the numerical solution, the generalized Green’s function solving an adjoint problem and variational analysis \cite{AO2000, Estep, beckerrannacher, DEstep, ELM}. This work is briefly summarized for initial value problems in \S \ref{sec:ivp_analysis}. It is usually employed within a finite element (variational) solution strategy, but can also be applied to finite difference and finite volume methods by recasting them as equivalent finite element methods~\cite{CEG+2015,dht_mathcomp_81,dd_mathcomp_86,EJL04,Logg2004,EGT2012,CET2015,CET2014,iternon}.
Nonlinear QoIs are treated by first linearizing around a computed solution e.g. see ~\cite{beckerrannacher, CaEsTa2008, CCH-2015}.

The goal of the current work is to derive accurate error estimates for the non-standard QoI given by equation \eqref{eq:QoI} that \emph{cannot} be expressed as a bounded linear functional of the solution $y$. In addition, we use the result to bound the error in an empirical distribution function for the nonstandard QoI corresponding to a stochastic parameter $\theta$. This is similar to the \textit{a posteriori} analysis for the error in CDF for standard QoIs for PDEs with random coefficients and random geometries appearing in \cite{ZebUQ,EstepUQ1,EstepUQ2}. \textcolor{black}{The situation is more complex for a stochastic differential equation when seeking to compute the expected value of a functional of the solution at an ``exit time'' $\tau$, when the solution first leaves a specified region, since the continuous solution trajectory may leave and re-enter the specified region undetected within a single time-step of the numerical integration scheme. Discussion of this problem appears in, e.g., \cite{gobet2001euler,dzougoutov2005adaptive,bouchard2017first}.}

We first perform a \emph{a priori} error analysis for the non-standard QoI given by equation \eqref{eq:QoI} assuming the initial value problem \eqref{eq:ivp} is solved using a numerical method with $O(h^p)$ convergence rate
and show that the error in the non-standard QoI converges at the same asymptotic rate. The  \emph{a posteriori} analysis for the error in the non-standard QoI appears in \S \ref{sec:estimates}. The first approach in \S \ref{sec:QoI_Taylor_series} takes advantage of linearization via Taylor series and employs auxiliary quantities of interest to obtain a formula that directly estimates the error in the QoI.  Our second approach, in \S \ref{sec:QoI_iterative} proceeds by using different root finding methods and again employs auxiliary quantities of interest. Numerical results supporting the accuracy of the error estimates for a deterministic system appear in \S \ref{sec:results}. Details of the error estimate for the CDF when $\theta$ is a random variable are provided in \S \ref{sec:error_cdf} and the bounds are computed for several examples in \S \ref{sec:uncertainty}.

\section{ \textit{A priori} analysis }

In this section, we present a general \textit{a priori}   result regarding the convergence of the approximate time that a threshold condition is met as the discretization is refined. Assume a continuous numerical solution, $Y(t)$, of order $p$ is computed to approximate the solution to the initial value problem \eqref{eq:ivp}. That is,
\begin{equation}
    \label{eq:order_num_soln}
    \left\| y(t)-Y(t) \right\|_{\mathbb{R}^m} \leq Ch^p,
\end{equation}
for all $t \in [0,T]$, for some constant $C > 0 $. Here $\| \cdot \|_{\mathbb{R}^m}$ denotes the standard Euclidean norm in $\mathbb{R}^m$ and $h$ denotes the  step-size used to compute the  numerical solution $Y(t)$.  For a given value of the threshold $R$, define $t_t$ and $t_c$ such that
\begin{equation} \label{eq:defn_true_approx_time}
S(y(t_t)) = R = S(Y(t_c)),
\end{equation}
where $t_t = Q(y)$ is the true value of the QoI~\eqref{eq:QoI}, and  $t_{c} = \min\limits_{t \in (0,T]}  \text{arg}( S(Y(t)) = R )$ is a computed approximation to the QoI.
Here, we assume
that $S$  satisfies the Lipschitz condition in $y$,
\begin{equation}
\label{eq:bound_S}
    | S (y_1(t)) - S (y_2(t)) | \leq  K \| y_1(t) - y_2(t)\|_{\mathbb{R}^m},
\end{equation}
for some constant $K > 0$.
Define the true error in the QoI,  $e_Q$, to be
\begin{equation} \label{eq:defn_error_QoI}
e_Q = t_t- t_c.
\end{equation}

\begin{theorem}[Convergence of the non-standard QoI]
\label{thm:aprioi_analysis}
Assume there is a numerical approximation to the solution of \eqref{eq:ivp} satisfying \eqref{eq:order_num_soln}, and the functional $S(y(t))$ is continuously differentiable with respect to $t$ in a neighborhood, $B$, which contains both the true QoI, $t_t$, as well as its numerical approximation, $t_c$. Further assume there exists an $M>0$ such that
\begin{equation}
\label{eq:deriv_S_wrt_t_bounded_below}
    \left| \frac{dS}{dt}(y(t)) \right| > M,
\end{equation}
for all $t \in B$. Then the error  $e_Q$  in the computed QoI, defined by \eqref{eq:defn_error_QoI},  satisfies the bound,
\begin{equation*}
    e_Q \leq \widehat{C}h^p,
\end{equation*}
for some constant $\widehat{C}$ which depends on $M,C$ and $K$.
\end{theorem}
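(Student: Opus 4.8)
The plan is to combine three ingredients: the Lipschitz stability of the functional $S$ from \eqref{eq:bound_S}, the order-$p$ accuracy of the numerical trajectory from \eqref{eq:order_num_soln}, and the Mean Value Theorem applied to the scalar map $g(t) := S(y(t))$, whose derivative is controlled below by $M$ via \eqref{eq:deriv_S_wrt_t_bounded_below}. The starting observation is that both defining relations in \eqref{eq:defn_true_approx_time} pin $S$ to the same value $R$, namely $S(Y(t_c)) = R = S(y(t_t))$, so the error in $t$ can be transferred into a small discrepancy in the value of $S$ along the \emph{true} trajectory.

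First I would evaluate $S$ on the true solution at the computed time $t_c$ and compare it to its value on the numerical solution at $t_c$. Applying \eqref{eq:bound_S} and then the convergence bound \eqref{eq:order_num_soln} at $t=t_c$ gives
\begin{equation*}
|S(y(t_c)) - S(Y(t_c))| \le K\,\|y(t_c) - Y(t_c)\|_{\mathbb{R}^m} \le K C h^p .
\end{equation*}
Since $S(Y(t_c)) = R = S(y(t_t))$, the left-hand side equals $|S(y(t_c)) - S(y(t_t))| = |g(t_c) - g(t_t)|$, so we have reduced the problem to estimating a difference of $g$ at the two times. Next I would invoke the hypothesis that $S(y(t))$ is continuously differentiable in $t$ on the neighborhood $B$ containing both $t_t$ and $t_c$, and apply the Mean Value Theorem: there exists $\xi$ between $t_c$ and $t_t$ with $g(t_c) - g(t_t) = g'(\xi)(t_c - t_t)$. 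Using the lower bound \eqref{eq:deriv_S_wrt_t_bounded_below} on $|g'(\xi)| = |\tfrac{dS}{dt}(y(\xi))|$ yields $|g(t_c) - g(t_t)| \ge M\,|t_c - t_t| = M\,|e_Q|$, with $e_Q$ as in \eqref{eq:defn_error_QoI}.

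Chaining the two estimates gives $M\,|e_Q| \le K C h^p$, hence $|e_Q| \le (KC/M)\,h^p$, which is the claimed bound with $\widehat{C} = KC/M$. The main point requiring care is the applicability of the Mean Value Theorem step: the intermediate point $\xi$ must lie in $B$ so that the derivative lower bound $M$ is available there. This is ensured by taking $B$ to be a genuine interval neighborhood containing both $t_t$ and $t_c$, so that the entire segment joining them — and in particular $\xi$ — lies in $B$; for $h$ sufficiently small $t_c$ is close to $t_t$, making this a natural standing assumption rather than a real obstruction. The only other subtlety is the well-posedness of $t_c$ as a point of $B$, which is built into the statement of the theorem.
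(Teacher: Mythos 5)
Your proof is correct and is essentially the paper's argument: both use $S(Y(t_c)) = R = S(y(t_t))$ to reduce the time error to the discrepancy $|S(y(t_c)) - S(y(t_t))|$, bound that discrepancy by $KCh^p$ via \eqref{eq:bound_S} and \eqref{eq:order_num_soln}, and convert back to $|e_Q|$ using the derivative lower bound $M$, arriving at the same constant $\widehat{C} = KC/M$. The only difference is cosmetic: the paper applies the Mean Value Theorem to the inverse function $t(S)$ (after invoking the Inverse Function Theorem), whereas you apply it directly to $g(t) = S(y(t))$ and then divide by $|g'(\xi)| > M$ --- a slightly more elementary route that bypasses the Inverse Function Theorem while requiring the same care (which you correctly note) that the intermediate point $\xi$ lies in $B$.
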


\begin{proof}

Given the true solution $y(t)$ to \eqref{eq:ivp}, we  consider the functional $S$ as an explicit function of t, i.e.,
\begin{equation}
    S(y)= S(y(t)) = S(t).
\end{equation}
Since $S(y(t))$ is continuously differentiable in $t$, for $t \in B$, by the Inverse Function Theorem (see \cite{Rudin}) we have $t=t(S)$ for $S$ in the image of $B$, and
\begin{equation}
    \frac{dt}{dS}(S(y(t))) = \frac{1}{\frac{dS}{dt}(t(S))}.
\end{equation}
Applying the Mean-value Theorem (see \cite{Apostol}) we have, for some $\xi$ between $S(y(t_t))$ and $S(y(t_c))$,
\begin{equation}
\begin{gathered}\begin{aligned}
            t_t-t_c & = \frac{dt}{dS}(\xi) \left[S(y(t_t))-S(y(t_c))\right] = \frac{1}{\frac{dS}{dt}(t(\xi))}\left[S(y(t_t))-S(y(t_c))\right].
\end{aligned}\end{gathered}
\end{equation}
Adding and subtracting the term $S(Y(t_c))$ and recalling that $S(y(t_t)) = R = S(Y(t_c))$,
\begin{equation}
\begin{gathered}\begin{aligned}
    t_t-t_c &= \frac{1}{\frac{dS}{dt}(t(\xi))}\left[S(y(t_t))-S(y(t_c)) + S(Y(t_c)) - S(Y(t_c))\right], \\
            &= \frac{1}{\frac{dS}{dt}(t(\xi))}\left[ S(Y(t_c)) -S(y(t_c)) \right].
\end{aligned}\end{gathered}
\end{equation}
Taking norms (absolute values for scalars), and using \eqref{eq:deriv_S_wrt_t_bounded_below} and \eqref{eq:bound_S},
\begin{equation} \label{eq:root_error_bound}
\begin{gathered}\begin{aligned}
    \left|t_t-t_c \right|   &= \left|\frac{1}{\frac{dS}{dt}(t(\xi))}\right| \left|  S(Y(t_c)) -S(y(t_c))\right| \leq \frac{1}{M} K \left\|y(t_c)-Y(t_c) \right\|_{\mathbb{R}^m} \leq \frac{1}{M} K Ch^p.
\end{aligned}\end{gathered}
\end{equation}
Defining $\widehat{C} := \frac{K \, C}{M}   $ gives the desired result.
\end{proof}

\begin{remark}
\label{rem:err_apriori}
Theorem \ref{thm:aprioi_analysis} and \eqref{eq:root_error_bound} implies that the approximate QoI $t_c$ converges to $t_t$ with at least the same rate as the numerical solution $Y(t)$.
Further, the conclusion that the constant in the final bound \eqref{eq:root_error_bound} is \emph{inversely} proportional to the lower bound on the absolute value of the derivative accords with the intuition developed through standard root-finding techniques such as Newton's method.
\end{remark}

\subsection{Example: Lorenz System}
\label{sec:lorenz_example_true_soln}
To illustrate
the convergence results in Theorem \ref{thm:aprioi_analysis}, we  consider the Lorenz system,
\begin{equation}\label{eq:Lorenz}
\left.
\begin{gathered}\begin{aligned}
    \dot{y}_1 &= \sigma(y_2 - y_1), \\
    \dot{y}_2 &= r y_1 - y_2 - y_1 y_3, \\
    \dot{y}_3 &= y_1 y_2 - b y_3,
\end{aligned}\end{gathered}
\; \right \} \;
t \in (0, 3] \qquad \hbox{with} \qquad
\left \{
\begin{gathered}\begin{aligned}
    y_1(0)&=1, \\
    y_2(0) &= 0, \\
    y_3(0) &= 24,
\end{aligned}\end{gathered}
\; \right. \;
\end{equation}
and set $\sigma =10, r=28, \text{ and } b= \frac{8}{3}$ (see \S \ref{sec:UQLorenz} for more details of this example). We define the functional $S(y(t))=y_1(t)$ and set the threshold value $R=-10$. Figure \ref{fig:Lorenz_true} illustrates an accurate reference solution as well as the threshold value and the QoI. Figure \ref{fig:Lorenz_converge} shows the convergence rates for the error in the solution and the error in the non-standard QoI when using the cG(1) method for computing the numerical solution. The cG(1) method (see \S \ref{sec:integ_schemes} for details) has second order accuracy and this convergence rate is observed both for the solution and the non-standard QoI.

\begin{figure}[h!]
    \centering
    \subfloat[Reference solution and QoI for the Lorenz system \eqref{eq:Lorenz}.]{\includegraphics[width=7cm]{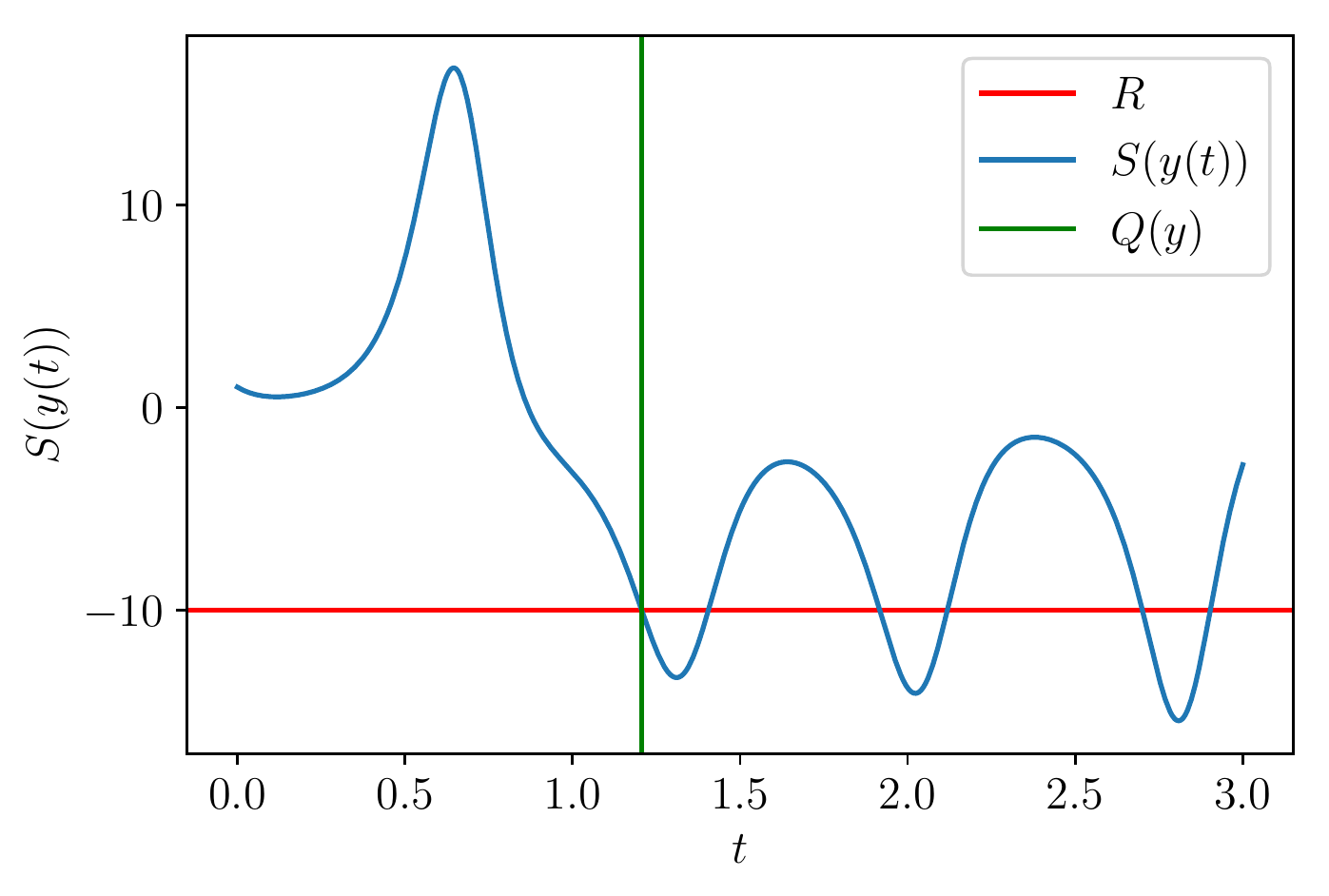}\label{fig:Lorenz_true}}
    \hfill
    \subfloat[Converge rates of the error in the solution and the error in the QoI. The numerical solution $Y$ and QoI $t_c$, are computed using the cG(1) method.]{\includegraphics[width=7cm]{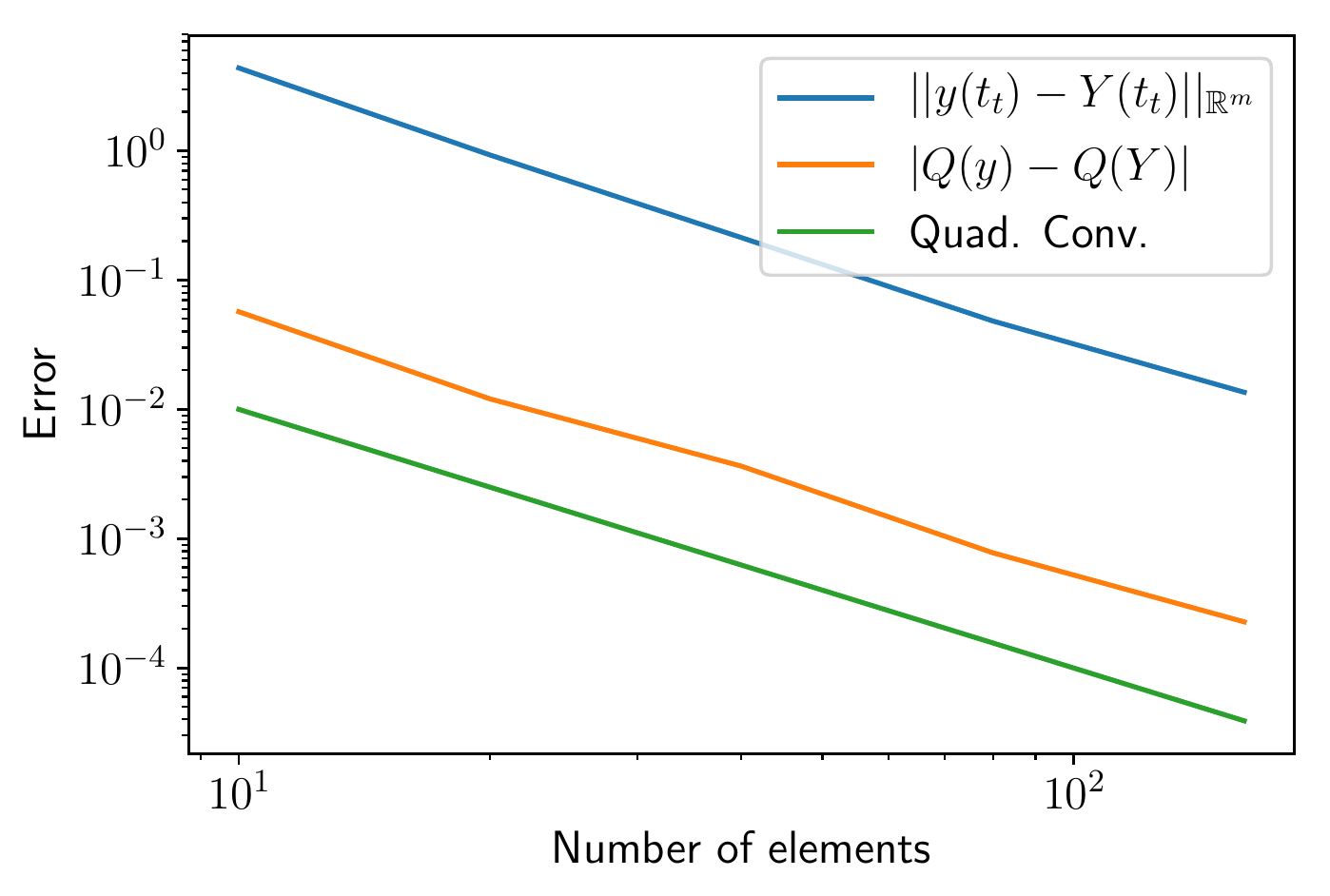}
    \label{fig:Lorenz_converge}}

    \caption{ }

\end{figure}

\section{\emph{A posteriori} error analysis}
\label{sec:estimates}

The aim  is to  derive an accurate \textit{a posteriori} error estimate $\eta \approx e_Q$.
The accuracy of the error estimate is quantified by the  effectivity ratio,
\begin{equation} \label{eq:defn_effectivity}
\rho_{\rm eff} = \frac{\eta}{e_Q}.
\end{equation}
An effectivity ratio close to one indicates an accurate error estimate. We let $\epsilon$ denote the error in the solution to \eqref{eq:ivp}, i.e.,
\begin{equation} \label{eq:defn_error_solution}
\epsilon(t) = y(t)-Y(t).
\end{equation}

\subsection{Integration schemes}
\label{sec:integ_schemes}

For simplicity, we consider a continuous FEM approximation $Y(t)$, $t \in [0,T]$, with approximate functional $S(Y(t))$ as illustrated in Figure \ref{fig:canonical_example}.
For each problem the linear functional $S(y(t))$ and the value of $R$ are specified, and the problems are solved using two different numerical schemes: (i) a variational cG(1) finite element scheme using 40 equally-sized elements and high-order Gaussian quadrature, and (ii) a Crank-Nicolson finite difference scheme with 21 equally-spaced nodes. However, we stress that the analysis can be extended to a wide variety of numerical methods \textcolor{black}{for which equivalence} to a finite element method can be established, as discussed in \S \ref{sec:introduction}.

Given  the partition $\mathcal{\tau} = \{0=t_0,t_1,...,t_N=T \}$ define the space,
\begin{equation*}
  \mathcal{V}^q  = \{ w \in C^0([0,T];\mathbb{R}^m) : w|_{I_n} \in \mathcal{P}^q(I_n), 1 \leq n \leq N \},
\end{equation*}
where $\mathcal{P}^q(I_n)$ is the space of all polynomials of degree $q$ or less on $I_n := [t_n, t_{n+1}]$.
The continuous Galerkin finite element method of order $q+1$, denoted cG($q$), for solving \eqref{eq:ivp}  is defined interval-wise by: Find $Y \in \mathcal{V}^q$ such that
\begin{equation}\label{cGq_itegral}
\int_{t_n}^{t_{n+1}}\dot{Y}(t) \cdot v(t) \; {\rm d}t = \int_{t_n}^{t_{n+1}} f(Y,t) \cdot v(t) dt,
  \quad \forall v \in \mathcal{P}^{q-1}(I_n),
\end{equation}
for $n=0,1,2,...,N-1$. The cG($q$) schemes are variational and hence well suited for adjoint based analysis. However, the Crank-Nicolson is also nodally equivalent to a variational scheme, see Theorem \ref{thm:Crank-Nicolson} in Appendix \ref{sec:proof_C-N}.

\begin{figure}[h!]
\centering
    \subfloat[Graph showing true functional $S(y(t))$, chosen value of R, and true value of QoI for the example in \S \ref{sec:example_linear}.]{\includegraphics[width=7cm]{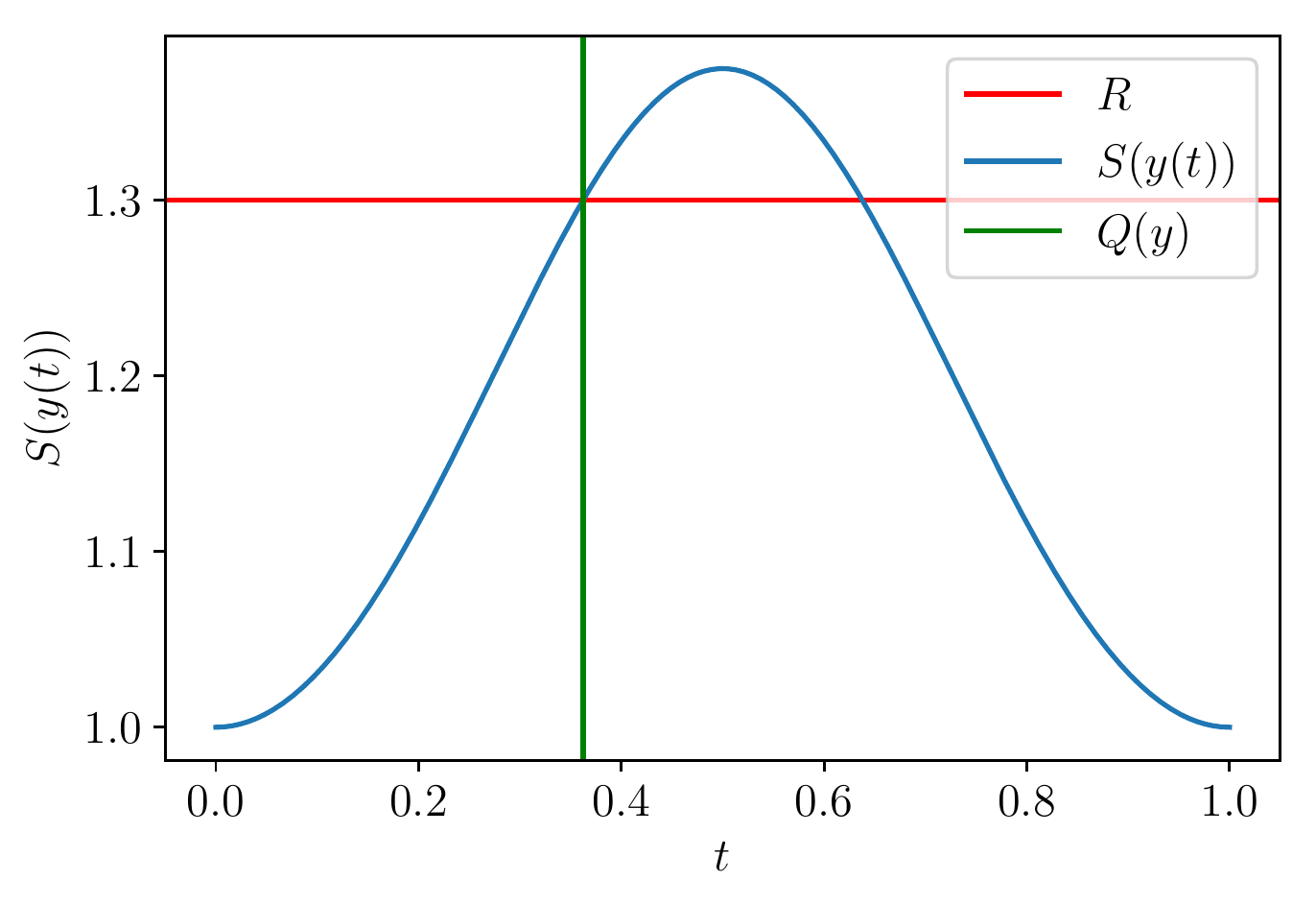}}
    \hfill
    \subfloat[Close up of true QoI and numerical QoI for the example in \S \ref{sec:example_linear}, solved using the Crank-Nicolson scheme.]{\includegraphics[width=7.46cm]{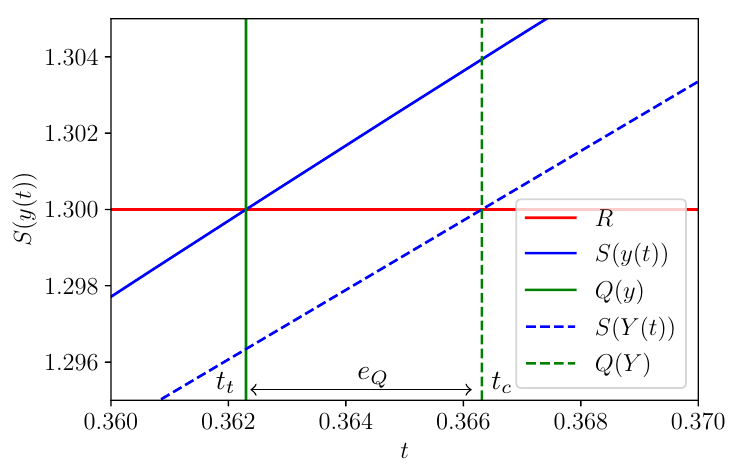}}
\caption{}
\label{fig:canonical_example}
\end{figure}

\subsection{Adjoint-based \emph{a posteriori} error analysis for standard QoIs}
\label{sec:ivp_analysis}
We derive error estimates for the nonstandard QoI in terms of expressions involving errors in linear functionals of the numerical solution. This section presents a standard \textit{a posteriori} error estimate for a linear functional of a solution. Let $(\cdot,\cdot)$ denote the inner-product pairing in $\mathbb{R}^{\textcolor{black}{m}}$.

\begin{theorem}[Adjoint-based \emph{a posteriori} error analysis for IVPs]
\label{thm:basic_error}

Given a finite element solution Y(t) of \eqref{eq:ivp} and $\psi \in \mathbb{R}^{\textcolor{black}{m}}$, the error  $(\psi,\epsilon(\hat{t}))$ at $\hat{t} \in (0,T]$ is represented as

\begin{equation} \label{eq:ivp_error_representation}
(\psi,\epsilon(\hat{t}))
= (\psi,y(\hat{t}))-(\psi,Y(\hat{t}))
= \int_{0}^{\hat{t} \ } (\phi , [ f(Y,t) - \dot{Y} ]) \ dt,
\end{equation}
where $\phi$ is the solution to the adjoint equation
\begin{equation} \label{eq:ivp_adjoint}
\begin{cases}
   -\dot{\phi} = \overline{f_{y,Y}(t)}^T\phi, \;  t \in \textcolor{black}{[0,\hat{t})},\\
    \phi(\hat{t}) = \psi,
\end{cases}
\end{equation}
with
\begin{equation}\label{dfdz}
    \overline{f_{y,Y}(t)} = \int_0^1 \frac{df}{dz}(z,t) {\rm d}s
\end{equation}
and $z=sy+(1-s)Y$,
\end{theorem}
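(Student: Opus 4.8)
The plan is to derive the representation \eqref{eq:ivp_error_representation} through a standard adjoint argument resting on three ingredients: an exact linearized equation for the error, integration by parts in time, and the defining property of the adjoint solution in \eqref{eq:ivp_adjoint}.

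First I would obtain a differential equation for the error $\epsilon(t) = y(t) - Y(t)$ from \eqref{eq:defn_error_solution}. Differentiating and using the governing equation \eqref{eq:ivp} gives $\dot{\epsilon} = f(y,t) - \dot{Y}$. The key algebraic step is to express the difference $f(y,t) - f(Y,t)$ \emph{exactly} in terms of $\epsilon$ via the fundamental theorem of calculus: with $z = sy + (1-s)Y$, integrating $\frac{d}{ds} f(z,t)$ over $s \in [0,1]$ yields $f(y,t) - f(Y,t) = \overline{f_{y,Y}(t)}\,\epsilon$, where $\overline{f_{y,Y}(t)}$ is the averaged Jacobian defined in \eqref{dfdz} (the factor $\epsilon = y-Y$ is independent of $s$ and pulls out of the integral). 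This produces $\dot{\epsilon} = \overline{f_{y,Y}(t)}\,\epsilon + (f(Y,t) - \dot{Y})$, in which the final term is the computable residual of the numerical solution.

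Next I would pair this error equation against $\phi$ in the $\mathbb{R}^m$ inner product and integrate over $(0,\hat{t})$. Applying integration by parts to $\int_0^{\hat{t}}(\phi,\dot{\epsilon})\,dt$ produces the boundary contributions $(\phi(\hat{t}),\epsilon(\hat{t})) - (\phi(0),\epsilon(0))$ together with $-\int_0^{\hat{t}}(\dot{\phi},\epsilon)\,dt$. The terminal condition $\phi(\hat{t}) = \psi$ turns the first boundary term into exactly the target $(\psi,\epsilon(\hat{t}))$. The volume contribution from the linearized term is handled by transposing the averaged Jacobian onto $\phi$, using $(\phi,\overline{f_{y,Y}(t)}\,\epsilon) = (\overline{f_{y,Y}(t)}^T\phi,\epsilon)$, and then invoking the adjoint equation $-\dot{\phi} = \overline{f_{y,Y}(t)}^T\phi$ to rewrite it as $-\int_0^{\hat{t}}(\dot{\phi},\epsilon)\,dt$. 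The whole construction is engineered precisely so that the two copies of $\int_0^{\hat{t}}(\dot{\phi},\epsilon)\,dt$ cancel, leaving $(\psi,\epsilon(\hat{t})) = \int_0^{\hat{t}}(\phi, f(Y,t) - \dot{Y})\,dt$ as claimed.

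The main obstacle, and really the only hypothesis requiring care, is the vanishing of the initial boundary term $(\phi(0),\epsilon(0))$. This demands $\epsilon(0) = y(0) - Y(0) = 0$, i.e., that the finite element solution matches the initial datum $y_0$ exactly; for the continuous Galerkin schemes considered here this holds by construction, and I would state it explicitly rather than leave it implicit. A secondary technical point is that the exact linearization requires $f$ to be continuously differentiable along the segment joining $Y(t)$ to $y(t)$, so that $\overline{f_{y,Y}(t)}$ in \eqref{dfdz} is well defined; this is a mild strengthening of the Lipschitz assumption already imposed on \eqref{eq:ivp}.
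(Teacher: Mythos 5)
Your proposal is correct and is precisely the standard adjoint argument: the paper itself omits the proof, deferring to the literature (``The proof is standard see \cite{eehj_actanum_95}''), and your derivation --- error equation, exact linearization via the averaged Jacobian \eqref{dfdz}, integration by parts against $\phi$, cancellation through the adjoint equation, and the vanishing of $(\phi(0),\epsilon(0))$ since $Y(0)=y_0$ for the cG schemes --- is exactly that standard proof. Your explicit flagging of the two hypotheses left implicit in the paper (exact matching of the initial datum and $C^1$ dependence of $f$ on $y$ so that \eqref{dfdz} is well defined) is a sound addition.
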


\begin{proof}
The proof is standard see \cite{eehj_actanum_95}.
\end{proof}

\textcolor{black}{
Note that the adjoint equation \eqref{eq:ivp_adjoint} is solved backward in time from $\hat{t}$ to $0$.
}

\subsection{\textit{A posteriori analysis} for  the non-standard QoI based on Taylor series}
\label{sec:QoI_Taylor_series}

We denote the error in the non-standard QoI as $e_Q = t_t - t_c$.

\begin{theorem}
\label{thm:error_1st_via_Taylor}
For an approximate solution $Y(t)$ to \eqref{eq:ivp} and a bounded linear functional $S(y(t))$ on  $(H^1((0,T]))^{\textcolor{black}{m}}$, \textcolor{black}{if the function $f(y,t)$ is continuously differentiable in $t$, then} the error in the QoI \eqref{eq:QoI} is given by
\begin{equation}\label{eq:Taylor_error}
\textcolor{black}{
e_Q
=
\frac{S(Y(t_c))-S(y(t_c))-\mathcal{R}_1(t_c,t_t)}
             {\nabla_yS(Y(t_c))\cdot f(Y(t_c),t_c) + \nabla_y [\nabla_yS(Y(t_c))
                  \cdot f(Y(t_c),t_c)] \cdot (y(t_c)-Y(t_c)) + \mathcal{R}_2(Y(t_c))} \,,
                  }
\end{equation}
where the remainder terms $\mathcal{R}_1(t_t,t_c)$ and $\mathcal{R}_2(t_c)$ satisfy

\begin{equation}\label{eq:Taylor_remain1}
    \mathcal{R}_1(t_t,t_c) = \frac{1}{2}\frac{d^2S}{dt^2}(y(\xi))(t_{t}-t_c)^2,
    \end{equation}
for some $\xi$ between $t_t$ and $t_c$
and
\begin{equation*}\label{eq:Taylor_remain2}
\mathcal{R}_2(Y(t_c)) = ||y(t_c)-Y(t_c)||\mathcal{H}_2(Y(t_c)), \; \hbox{for $\mathcal{H}_2$ with} \;
  \lim_{Y(t_c)\rightarrow y(t_c)}\mathcal{H}_2(Y(t_c)) = 0,
\end{equation*}
and $|| \cdot ||$ denotes the Euclidean norm on $\mathbb{R}^{\textcolor{black}{m}}$.
\end{theorem}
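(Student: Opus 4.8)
The plan is to obtain the identity \eqref{eq:Taylor_error} by combining two Taylor expansions: one in the time variable $t$, which produces the remainder $\mathcal{R}_1$ and exposes $e_Q$ linearly, and one in the state variable $y$, which produces $\mathcal{R}_2$ and replaces the exact (uncomputable) value $y(t_c)$ by the computable $Y(t_c)$ in the denominator. The whole derivation hangs off the defining relation $S(y(t_t)) = R = S(Y(t_c))$ from \eqref{eq:defn_true_approx_time}.

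First I would expand $S(y(t))$, viewed as a scalar function of $t$, about $t_c$ and evaluate at $t_t$. Since $f$ is continuously differentiable, the trajectory $y$ is $C^2$ and hence $S(y(t))$ is twice continuously differentiable in $t$ on a neighborhood of $t_c$; Taylor's theorem with Lagrange remainder then gives, exactly,
\begin{equation*}
S(y(t_t)) = S(y(t_c)) + \frac{dS}{dt}(y(t_c))\,(t_t-t_c) + \frac{1}{2}\frac{d^2S}{dt^2}(y(\xi))\,(t_t-t_c)^2,
\end{equation*}
for some $\xi$ between $t_t$ and $t_c$, with the last term being precisely $\mathcal{R}_1$ as in \eqref{eq:Taylor_remain1}. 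Substituting $S(y(t_t)) = S(Y(t_c))$ and solving this linear relation for $e_Q = t_t - t_c$ gives
\begin{equation*}
e_Q = \frac{S(Y(t_c)) - S(y(t_c)) - \mathcal{R}_1(t_t,t_c)}{\frac{dS}{dt}(y(t_c))},
\end{equation*}
which already matches the claimed numerator. This step requires $\frac{dS}{dt}(y(t_c)) \neq 0$, which holds asymptotically by the lower bound \eqref{eq:deriv_S_wrt_t_bounded_below} invoked in Theorem~\ref{thm:aprioi_analysis}, since $Y(t_c) \to y(t_c)$ under refinement.

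Next I would rewrite the denominator in terms of $Y$. Because $S$ is linear, the chain rule yields $\frac{dS}{dt}(y(t)) = S(\dot y(t)) = S(f(y(t),t)) = \nabla_y S \cdot f(y(t),t)$, so at $t_c$ the denominator equals $g(y(t_c))$, where $g(w) := \nabla_y S(w)\cdot f(w,t_c)$. Expanding $g$ about $Y(t_c)$ by first-order Taylor's theorem in the state variable gives
\begin{equation*}
g(y(t_c)) = \nabla_yS(Y(t_c))\cdot f(Y(t_c),t_c) + \nabla_y\bigl[\nabla_yS(Y(t_c))\cdot f(Y(t_c),t_c)\bigr]\cdot (y(t_c)-Y(t_c)) + \mathcal{R}_2(Y(t_c)),
\end{equation*}
where the Peano form of the remainder is exactly $\mathcal{R}_2(Y(t_c)) = \|y(t_c)-Y(t_c)\|\,\mathcal{H}_2(Y(t_c))$ with $\mathcal{H}_2(Y(t_c)) \to 0$ as $Y(t_c)\to y(t_c)$, by differentiability of $g$. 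Inserting this into the denominator produces the stated formula \eqref{eq:Taylor_error}.

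I expect the principal obstacle to be regularity bookkeeping rather than any deep difficulty. Specifically, the time expansion needs $S(y(\cdot))\in C^2$ near $t_c$, which forces $y\in C^2$ and hence requires $f$ to be $C^1$ jointly in $(y,t)$ (so that $\ddot y = \nabla_y f\cdot f + \partial_t f$ is continuous), slightly more than the literal wording ``continuously differentiable in $t$''; and the state expansion needs $g(w)=\nabla_y S(w)\cdot f(w,t_c)$ to be differentiable in $w$, again supplied by $C^1$ regularity of $f$ (note that the linearity of $S$ makes $\nabla_y S$ a constant covector, so $\nabla_y S(Y(t_c)) = \nabla_y S(y(t_c))$ and no error is incurred there). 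The only other point I would flag explicitly is well-definedness of the quotient, handled via the nonvanishing-derivative hypothesis as above.
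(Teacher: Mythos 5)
Your proof is correct and follows essentially the same route as the paper: a Taylor expansion of $S(y(\cdot))$ in time about $t_c$ (yielding $\mathcal{R}_1$ via the relation $S(y(t_t))=S(Y(t_c))$), followed by the chain rule $\frac{dS}{dt}=\nabla_yS\cdot f$ and a first-order Taylor expansion in the state variable about $Y(t_c)$ to convert the denominator into computable form plus $\mathcal{R}_2$. The only cosmetic differences are that you solve for $e_Q$ before expanding the denominator (the paper substitutes first and rearranges last) and you invoke the Peano form of $\mathcal{R}_2$ directly, whereas the paper exhibits it as an explicit Hessian remainder; your added remarks on the nonvanishing denominator and the joint $C^1$ regularity of $f$ are sensible refinements of hypotheses the paper leaves implicit.
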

\begin{proof}

From the definition of the  functional $S(y(t))$ and $R$,
\begin{equation}\label{equiv}
    S(Y(t_c)) = R = S(y(t_{t})).
\end{equation}
Expanding $S(y(t_{t}))$ using Taylor's Theorem with remainder centered at $t_c$ (e.g. see \cite{Apostol}) in \eqref{equiv},
\begin{equation}\label{firstapprox}
    S(Y(t_c)) = S(y(t_c)) + \frac{dS}{dt}(y(t_c))(t_{t}-t_c) + \mathcal{R}_1(t_c,t_t),
\end{equation}
Applying the chain-rule to the derivative in \eqref{firstapprox} and using \eqref{eq:ivp} gives
\begin{equation}\label{chain_rule}
\begin{aligned}
S(Y(t_c)) = S(y(t_c))
+ \textcolor{black}{ \left[ \nabla_yS(y(t_c))\cdot f(y(t_c),t_c) \right] } (t_{t}-t_c)
+ \mathcal{R}_1(t_c,t_t).
\end{aligned}
\end{equation}
Adding and subtracting the term $ \nabla_yS(Y(t_c))\cdot f(Y(t_c),t_c)$ inside the square brackets gives
\begin{equation}\label{halfway}
\begin{aligned}
&S(Y(t_c))= S(y(t_c)) \\
&+\textcolor{black}{ \left[ \nabla_yS(Y(t_c))\cdot f(Y(t_c),t_c) + \left( \nabla_yS(y(t_c))\cdot f(y(t_c),t_c) - \nabla_yS(Y(t_c))\cdot f(Y(t_c),t_c) \right) \right] } (t_{t}-t_c) \\
&+ \mathcal{R}_1(t_c,t_t).
\end{aligned}
\end{equation}
Using the multi-variable Taylor's Theorem with remainder centered at $Y$ (e.g. see \cite{Apostol2}) gives
\begin{equation}\label{secondapprox}
    \nabla_yS(y(t_c))\cdot f(y(t_c),t_c) - \nabla_yS(Y(t_c))\cdot f(Y(t_c),t_c) = \nabla_y[\nabla_yS(Y(t_c))\cdot f(Y(t_c),t_c)]\cdot(y(t_c)-Y(t_c)) +\mathcal{R}_2(Y(t_c)),
\end{equation}
where the remainder is of the form
\begin{equation}
\label{eq:rem_r_2}
    \mathcal{R}_2(Y(t_c)) = \frac{1}{2}(Y(t_c)-y(t_c))^\top \textbf{H}_y(\nabla_yS(\xi)\cdot f(\xi,t_c))(Y(t_c)-y(t_c)),
\end{equation}
for some $\xi$ between $y(t_c)$ and $Y(t_c)$, and
where $\textbf{H}_y$ is the Hessian
\begin{equation*}
    (\textbf{H}_y)_{i,j} = \frac{\partial^2}{\partial y_i \partial y_j}.
\end{equation*}
Substituting \eqref{secondapprox} in to \eqref{halfway} and rearranging to isolate the error of the QoI, results in
\begin{equation}\label{direct}
\begin{aligned}
&(t_{t}-t_c) = \\
&\textcolor{black}{\frac{S(Y(t_c))-S(y(t_c))-\mathcal{R}_1(t_c,t_t)}{ \nabla_y S(Y(t_c))\cdot f(Y(t_c),t_c) + \nabla_y[\nabla_y S(Y(t_c))\cdot f(Y(t_c),t_c)]\cdot(y(t_c)-Y(t_c))+\mathcal{R}_2(Y(t_c))}}.
\end{aligned}
\end{equation}

\end{proof}

\begin{corollary}\textcolor{black}{
For functionals $S(y(t),t)$ that are explicitly dependent on $t$,}
\begin{equation}\label{eq:Taylor_error_simp1}\textcolor{black}{
e_Q = \frac{S(Y(t_c),t_c)-S(y(t_c),t_c)-\mathcal{R}_1(t_c,t_t)}
                { \dfrac{\partial S}{\partial t}(y(t_c),t_c)  + \nabla_y S(Y(t_c),t_c)\cdot f(Y(t_c),t_c) + \nabla_y [\nabla_y S(Y(t_c),t_c) \cdot f(Y(t_c),t_c)]
                   \cdot (y(t_c)-Y(t_c)) + \mathcal{R}_2(Y(t_c)))} } \,.
\end{equation}
\textcolor{black}{Where the partial derivative of $S$ with respect to $t$ appears from the chain-rule applied to \eqref{firstapprox}.}
\end{corollary}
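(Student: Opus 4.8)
The plan is to reproduce the argument of Theorem~\ref{thm:error_1st_via_Taylor} essentially verbatim, tracking the single additional term that arises when $S$ carries an explicit dependence on $t$. First I would record the threshold equivalence, which now reads $S(Y(t_c),t_c) = R = S(y(t_t),t_t)$, since both the true crossing and the numerical crossing are defined through the augmented functional. Viewing $t \mapsto S(y(t),t)$ as a scalar function of $t$ along the true trajectory, I would expand it by Taylor's Theorem with remainder about $t = t_c$, obtaining $S(y(t_t),t_t) = S(y(t_c),t_c) + \frac{dS}{dt}(y(t_c),t_c)\,(t_t - t_c) + \mathcal{R}_1(t_c,t_t)$, with $\mathcal{R}_1$ of exactly the form~\eqref{eq:Taylor_remain1} but now built from the \emph{total} second derivative $\frac{d^2 S}{dt^2}$ along the trajectory; the hypothesis that $f$ (and hence $S$ evaluated along the solution) is continuously differentiable in $t$ is what legitimizes this remainder.

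The one structural difference from the theorem appears at the chain-rule step. Whereas in the theorem $\frac{dS}{dt}(y(t_c)) = \nabla_y S(y(t_c)) \cdot f(y(t_c),t_c)$, here the total derivative splits as
\begin{equation*}
\frac{dS}{dt}(y(t_c),t_c) = \frac{\partial S}{\partial t}(y(t_c),t_c) + \nabla_y S(y(t_c),t_c)\cdot \dot{y}(t_c) = \frac{\partial S}{\partial t}(y(t_c),t_c) + \nabla_y S(y(t_c),t_c)\cdot f(y(t_c),t_c),
\end{equation*}
using $\dot{y} = f(y,t)$ from~\eqref{eq:ivp}. This produces the extra summand $\frac{\partial S}{\partial t}(y(t_c),t_c)$, evaluated at the true solution, which I would carry along unchanged; it is precisely the term that distinguishes~\eqref{eq:Taylor_error_simp1} from~\eqref{eq:Taylor_error}.

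From here the remaining manipulations are identical to the theorem. I would add and subtract $\nabla_y S(Y(t_c),t_c)\cdot f(Y(t_c),t_c)$ inside the coefficient of $(t_t - t_c)$, then apply the multi-variable Taylor's Theorem with remainder centered at $Y(t_c)$ to the difference $\nabla_y S(y(t_c),t_c)\cdot f(y(t_c),t_c) - \nabla_y S(Y(t_c),t_c)\cdot f(Y(t_c),t_c)$, which yields the gradient-times-error term together with the second-order remainder $\mathcal{R}_2(Y(t_c))$ of the form~\eqref{eq:rem_r_2}. Solving the resulting scalar identity for $t_t - t_c$ gives~\eqref{eq:Taylor_error_simp1}, with the $\partial S/\partial t$ contribution sitting in the denominator as claimed.

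I do not expect a genuine obstacle here, since the result is a direct extension of the theorem; the only point demanding care is bookkeeping the evaluation points ($y$ versus $Y$, $t_c$ versus $t_t$) and recognizing that only the $\nabla_y S \cdot f$ piece is re-centered at $Y(t_c)$ via the second Taylor expansion, while the explicit $\partial S/\partial t$ term is deliberately left at $y(t_c)$. One should also confirm that the total derivatives $\frac{dS}{dt}$ and $\frac{d^2 S}{dt^2}$ are well defined along the trajectory, which follows from the assumed continuous differentiability of $f$ in $t$ together with the smoothness of the linear functional $S$.
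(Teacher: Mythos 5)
Your proposal is correct and follows essentially the same route as the paper: the paper's proof likewise observes that the chain rule applied to the Taylor expansion~\eqref{firstapprox} now yields the extra term $\frac{\partial S}{\partial t}(y(t_c),t_c)$, and then states that the remainder of the argument mimics the proof of Theorem~\ref{thm:error_1st_via_Taylor} while retaining this additional partial derivative. Your careful bookkeeping (re-centering only the $\nabla_y S \cdot f$ piece at $Y(t_c)$ and leaving $\partial S/\partial t$ evaluated at $y(t_c)$) matches the corollary's statement exactly.
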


\textcolor{black}{
\begin{proof}
If $S$ depends explicitly on $t$, then \eqref{chain_rule} becomes
\begin{align*}
S(Y(t_c),t_c) = S(y(t_c),t_c)
+ \left[ \nabla_yS(y(t_c),t_c)\cdot f(y(t_c),t_c) + \dfrac{\partial S}{\partial t}(y(t_c),t_c) \right] (t_{t}-t_c)
+ \mathcal{R}_1(t_c,t_t).
\end{align*}
The remainder of the proof mimics the proof of Theorem \ref{thm:error_1st_via_Taylor} retaining this additional partial derivative.
\end{proof}
}

Note that functionals \textcolor{black}{$S(y(t),t)$} that depend directly on $t$ require special treatment of the term \textcolor{black}{$\frac{\partial S}{\partial t}(y(t_c),t_c)$} in \textcolor{black}{\eqref{eq:Taylor_error_simp1}}. More precisely, one can use another application of Taylor's Theorem centered at $Y(t_c)$ in order to make this term computable.
\begin{corollary}
For functionals of the form $S(y(t)) = v \cdot y(t), \text{ for some } v \in \mathbb{R}^{\textcolor{black}{m}}$, $\nabla_y S(y(t)) = v$, and
\eqref{eq:Taylor_error}
becomes
\begin{equation}\label{eq:Taylor_error_simp2}
\begin{gathered}\begin{aligned}
e_Q &= \frac{ -v \cdot (y(t_c)-Y(t_c)) - \mathcal{R}_1(t_c,t_t) }
                 { v\cdot f(Y(t_c),t_c) + v^\top \nabla_y f(Y(t_c),t_c) \cdot (y(t_c)-Y(t_c))
                   + \mathcal{R}_2(Y(t_c))}  \\
&= \frac{ -v \cdot \epsilon(t_c) - \mathcal{R}_1(t_c,t_t) }
                { v\cdot f(Y(t_c),t_c) + v^\top \nabla_y f(Y(t_c),t_c) \cdot \epsilon(t_c)
                   + \mathcal{R}_2(Y(t_c))}
\end{aligned}\end{gathered}
\end{equation}
\end{corollary}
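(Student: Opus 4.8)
The plan is to specialize the general formula \eqref{eq:Taylor_error} to the linear functional $S(y(t)) = v \cdot y(t)$, exploiting the fact that for such $S$ the gradient is constant in $y$. First I would observe that $\nabla_y S(y(t)) = v$ regardless of the argument, so every occurrence of $\nabla_y S(\cdot)$ in the denominator of \eqref{eq:Taylor_error} collapses to the fixed vector $v$. This immediately converts the directional derivative term $\nabla_y S(Y(t_c))\cdot f(Y(t_c),t_c)$ into $v \cdot f(Y(t_c),t_c)$.

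Next I would handle the second-order term in the denominator, $\nabla_y[\nabla_y S(Y(t_c))\cdot f(Y(t_c),t_c)]\cdot(y(t_c)-Y(t_c))$. Since $\nabla_y S(Y(t_c)) = v$ is constant, the inner scalar expression becomes $v \cdot f(Y(t_c),t_c)$, and applying $\nabla_y$ to this yields $v^\top \nabla_y f(Y(t_c),t_c)$ (the gradient acts only on $f$, as $v$ carries no $y$-dependence). Pairing this with $(y(t_c)-Y(t_c))$ gives the term $v^\top \nabla_y f(Y(t_c),t_c)\cdot(y(t_c)-Y(t_c))$ in the denominator, exactly matching \eqref{eq:Taylor_error_simp2}.

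For the numerator, I would simplify $S(Y(t_c)) - S(y(t_c))$. By linearity this is $v \cdot Y(t_c) - v \cdot y(t_c) = -v \cdot (y(t_c) - Y(t_c))$, which then equals $-v \cdot \epsilon(t_c)$ by the definition \eqref{eq:defn_error_solution} of $\epsilon$. The remainder term $\mathcal{R}_1(t_c,t_t)$ is unchanged since it depends only on time derivatives of $S(y(t))$. Finally I would rewrite $y(t_c) - Y(t_c)$ as $\epsilon(t_c)$ throughout to obtain the second equality in \eqref{eq:Taylor_error_simp2}.

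I expect no serious obstacle here, as the corollary is a direct substitution into the already-proven Theorem \ref{thm:error_1st_via_Taylor}. The only point requiring a small amount of care is the claim that $\nabla_y[v \cdot f] = v^\top \nabla_y f$: this is where one must verify that differentiating the constant-coefficient contraction $\sum_i v_i f_i(y,t)$ with respect to $y$ produces the Jacobian of $f$ contracted against $v$, i.e. the $(i,j)$ entry $\sum_k v_k \,\partial f_k/\partial y_j$. Confirming this index bookkeeping, and that it correctly pairs with the error vector $\epsilon(t_c)$, is the main verification step, though it is entirely routine once the constancy of $\nabla_y S = v$ is recognized.
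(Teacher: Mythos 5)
Your proposal is correct and is precisely the argument the paper intends: the paper states this corollary without proof, treating it as the direct substitution of $\nabla_y S = v$ into \eqref{eq:Taylor_error} that you carry out. Your verification of the index bookkeeping $\nabla_y[v\cdot f] = v^\top \nabla_y f$ and the rewriting of $y(t_c)-Y(t_c)$ as $\epsilon(t_c)$ via \eqref{eq:defn_error_solution} fills in exactly the routine steps the paper leaves implicit.
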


\bigskip
\noindent{\bf Obtaining a computable error estimate.}
Taylor's Theorem gives that the two functions $\mathcal{R}_1$ and $\mathcal{R}_2$ in equations \eqref{eq:Taylor_error} and \eqref{eq:Taylor_error_simp2} decay to zero super-linearly as $t_c \rightarrow t_t$ and $Y(t_c) \rightarrow y(t_c)$, respectively. Provided  the numerical solution $Y(t)$ is fairly accurate, $\mathcal{R}_1$ will be small compared to the other terms in \eqref{firstapprox} and $\mathcal{R}_2$ will be small compared to the terms in \eqref{secondapprox}. This leads to the first approximation of the error,
\begin{equation}\label{eq:taylor_eta_approx}
    \eta(Y) = \frac{-v \cdot \epsilon(t_c)}
                   { v\cdot f(Y(t_c),t_c) + (v^\top \nabla_y f(Y(t_c),t_c))\cdot \epsilon(t_c)} \,.
\end{equation}

\begin{remark}
\label{rem:err_est_accuracy}

Note that the functional $S$ may achieve the value $R$ at multiple times.
Assume there exists a time $\textcolor{black}{\tilde t} > t_t$ such that $S(y(\textcolor{black}{\tilde t})) = R$.
Equation~\eqref{equiv} is then valid at time $\textcolor{black}{\tilde t}$, i.e., \textcolor{black}{$S(Y(t_c)) = R = S(y(\tilde t))$} and  \eqref{eq:Taylor_error} follows with $t_t$ replaced by $\textcolor{black}{\tilde t}$ and $\xi$ replaced by \textcolor{black}{$\tilde\xi$}.
In the estimate \eqref{eq:taylor_eta_approx} we approximate the term $\mathcal{R}_1(t_c,\cdot)$ by zero.
If the numerical solution is sufficiently accurate, then $\vert t_t - t_c \vert < \vert \textcolor{black}{\tilde t} - t_c \vert$ and $0 \approx \mathcal{R}_1(t_c,t_t) \ll  \mathcal{R}_1(t_c,\textcolor{black}{\tilde t})$.
However, if the numerical solution is inaccurate, we may have the reverse situation, where $\vert t_t - t_c \vert > \vert \textcolor{black}{\tilde t} - t_c \vert$,
in which case the error estimate will be inaccurate or worse,
$\mathcal{R}_1(t_c,\textcolor{black}{\tilde t}) \approx 0$
and the estimate may indicate the value of $\textcolor{black}{\tilde t} - t_c$ rather than $t_t - t_c$.
We observe this phenomenon in \S \ref{sec:example_harmonic_interval} and is illustrated by Table \ref{tab:shiftosc_CN} and Figure \ref{fig:shiftosc_CN}.
\end{remark}

The estimate \eqref{eq:taylor_eta_approx} contains two terms that are linear functionals of the error at time $t_c$. These can both be approximated by the standard techniques in \S \ref{sec:ivp_analysis} as is discussed next.

\medskip
\noindent{\bf First adjoint problem}
In order to estimate $-v  \cdot \epsilon(t_c)$, we solve \eqref{eq:ivp_adjoint} with adjoint data $\psi = \psi_1 = -v$ and $\hat{t} = t_c$, then substitute the solution $\phi_1$ in \eqref{eq:ivp_error_representation} to provide the estimate
\begin{equation}\label{eq:taylor_numerator_estimate}
    \mathcal{E}_1(Y,\phi_1) \approx  \psi_1 \cdot \epsilon(t_c)   = -v  \cdot \epsilon(t_c).
\end{equation}

\medskip
\noindent{\bf Second adjoint problem}
In order to estimate $v^T \nabla_y f(Y(t_c),t_c) \cdot \epsilon(t_c)$, we solve \eqref{eq:ivp_adjoint} with adjoint data $\psi = \psi_2 = v^T\nabla_y f(Y(t_c),t_c)$ and $\hat{t} = t_c$, then substitute the solution $\phi_2$ in \eqref{eq:ivp_error_representation} to provide the estimate
\begin{equation}\label{eq:taylor_denominator_estimate}
    \mathcal{E}_2(Y,\phi_2) \approx \psi_2 \cdot \epsilon(t_c)   =  v^\top \nabla_y f(Y(t_c),t_c) \cdot \epsilon(t_c).
\end{equation}

\medskip

\noindent{\bf Computable error based on Taylor series and adjoint techniques.}
For an approximate solution $Y(t)$ to \eqref{eq:ivp} and a linear functional $S(Y(t))=v\cdot Y(t)$, a computable estimate of the error in the QoI \eqref{eq:QoI}  is obtained by
substituting \eqref{eq:taylor_numerator_estimate} and \eqref{eq:taylor_denominator_estimate} in   \eqref{eq:taylor_eta_approx},
\begin{equation}\label{eq:Taylor_computable}
    \eta(Y,\phi_1,\phi_2) = \frac{\mathcal{E}_1(Y,\phi_1)}{ v\cdot f(Y(t_c),t_c) + \mathcal{E}_2(Y,\phi_2)} \,.
\end{equation}

\subsection{Error in non-standard QoI based on iterative techniques}
\label{sec:QoI_iterative}

Given an approximate solution $Y(t)$ to \eqref{eq:ivp} with numerical QoI $t_c$, define $g(t)$ as
\begin{equation}
\begin{gathered}\begin{aligned}
g(t) &= S(y(t)) - R, \\
     &= S(Y(t)) + \big( S(y(t))-S(Y(t)) \big) - R, \\
\end{aligned}\end{gathered}
\end{equation}
so
\begin{equation}
    g(t_t) = 0.
\end{equation}
In the case where $S(t)$ is a linear functional of $y(t)$, i.e., $S(y(t)) = v \cdot y(t)$, then
\begin{equation*}
g(t) = S(Y(t)) +  v \cdot \epsilon(t) - R.
\end{equation*}
At $t=\hat{t}$ we estimate $v \cdot \epsilon(\hat{t})$ by solving  \eqref{eq:ivp_adjoint} with adjoint data $\psi = \psi_3 = v^\top$ and substituting the solution $\phi_3$ in to \eqref{eq:ivp_error_representation} to provide the estimate
\begin{equation}\label{eq:iterative_est}
    \mathcal{E}_3(Y,\phi_3;\hat{t}) \approx  v^\top \cdot(y(\hat{t})-Y(\hat{t})),
\end{equation}
hence
\begin{equation*}
    g(\hat{t}) =  S(Y(\hat{t})) + \mathcal{E}_3(Y,\phi_3; \hat{t}) - R.
\end{equation*}
We find $t^*$ such that $g(t^*) \approx 0$ via a standard root finding procedure, then
\begin{equation} \label{eq:iterative_estimate}
\eta(Y) = t^*-t_c.
\end{equation}
There are many options for root finding methods for  computing $\eta$. In this article, we use two of the basic root finding methods: the secant method and the inverse quadratic method.

\subsubsection{Error estimate based on the secant method}
\label{sec:iterative_error_secant}
Given initial values $x_0, \, x_1$, the method is defined by the recurrence
\begin{equation}\label{secant}
    x_{n} =\frac{x_{n-2}*g(x_{n-1})-x_{n-1}*g(x_{n-2})}{g(x_{n-1})-g(x_{n-2})} \quad n=2,3,\dots
\end{equation}
(See \cite{Gautschi}). For the initial guesses the examples presented choose $x_0 < t_c < x_1$. These choices are made precise in the numerical examples in \S \ref{sec:results}.

\subsubsection{Error estimate based on inverse quadratic interpolation}
\label{sec:iterative_error_inverse_quadratic}

Given initial values $x_0, \, x_1, \, x_2$, the method is defined by the recurrence
\begin{equation}\label{invquad}
\begin{gathered}\begin{aligned}
&x_{n} =
 \frac{x_{n-3} \ g_{n-2} \ g_{n-1}}{(g_{n-3}-g_{n-2})(g_{n-3}-g_{n-1})} +
 \frac{x_{n-2} \ g_{n-3} \ g_{n-1}}{(g_{n-2}-g_{n-3})(g_{n-2}-g_{n-1})} +
 \frac{x_{n-1} \ g_{n-2} \ g_{n-3}}{(g_{n-1}-g_{n-2})(g_{n-1}-g_{n-3})}\,. \\
&\qquad n=3,4,\dots.
\end{aligned}\end{gathered}
\end{equation}
(See \cite{Epper}). The choice of the initial guesses is made precise in the numerical examples in \S \ref{sec:results}.

\subsection{Comparison of the two error estimation methods}

The method based on Taylor series always requires fewer adjoint problems to be solved than using one of the iterative methods. However, the estimate \eqref{eq:taylor_eta_approx} neglects certain terms compared to the error representation \eqref{eq:Taylor_error_simp2}. If any of the neglected terms are large, the error estimate may be inaccurate even though an accurate numerical solution is used. The iterative methods only rely on the initial guesses and point-wise error computation, which is computed accurately. The initial guesses defined in Section \ref{sec:results} bracket the computed QoI, and provided the computed solution is sufficiently accurate and the initial bracket contains only a single value $t$ such that $S(y(t)=R$, the iterative methods will be accurate.
Numerical comparisons of the two methods, as well as limitations of both are discussed throughout Section \ref{sec:results}.

\begin{color}{black}
\subsection{Error in a cumulative density function}
\label{sec:error_cdf}
	
If the differential equation \eqref{eq:ivp} depends on a random parameter $\theta$, then the solution $y(t;\theta)$ and the QoI, $Q(y;\theta)$, are random variables. As a random variable, $Q(y;\theta)$ has a corresponding cumulative distribution function (CDF),
\begin{equation*}
	F(t) = P(\{ \theta: Q(y;\theta)\leq t \}) = P(Q\leq t).
\end{equation*}
An approximation to the CDF  is computed using  the Monte Carlo method with a finite number of numerically computed sample values
$\{ \hat{Q}(Y^{[n]},\theta^{[n]} ) = \hat{Q}^{[n]}\}_{n=1}^N$,
\begin{equation}\label{approxCDF}
\hat{F}_N(t) = \frac{1}{N}\sum_{n=1}^N {\mathds{1}( \hat{Q}^{[n]}\leq t) },
\end{equation}
where $\mathds{1}$ is the indicator function. A nominal sample distribution is computed using exact values of the QoI,
\begin{equation}\label{nomCDF}
F_N(t) = \frac{1}{N}\sum_{n=1}^N {\mathds{1}( Q^{[n]}\leq t)}.
\end{equation}
	
An estimate of the error in an approximate distribution of the QoI \eqref{eq:QoI} is computed for two examples in \S \ref{sec:uncertainty}. The estimate takes into account error contributions due to finite sampling and errors arising from the discretization of the ODE.
The expressions \eqref{approxCDF} and \eqref{nomCDF}  decompose the error in to sampling and {discretization} contributions,
\begin{equation*}
F(t)-{\hat{F}_N(t)} = (F(t)-F_N(t)) + (F_N(t)-\hat{F}_N(t)).
\end{equation*}
This decomposition is used to derive the following error bound.
	
\begin{theorem}
	\label{thm:cdf_bound}
	For $0 < \varepsilon < 1$,

	\begin{align}\label{CDFerr}
	\left|F(t)-\hat{F}_N(t) \right|
	&\leq\left(\frac{\hat{F}_N(t)\left(1-\hat{F}_N(t) \right)}{N\varepsilon} \right)^{1/2} + \left( \frac{1}{N} + \frac{1}{N\varepsilon^{1/2}} \right) \left|\sum_{n=1}^N\left( \mathds{1}(\hat{Q}^{[n]}-\left|e_Q^{[n]} \right| \leq t \leq \hat{Q}^{[n]}+\left|e_Q^{[n]} \right| ) \right) \right| \nonumber \\
	& \ \ \ +\frac{2}{\left(2N\varepsilon\right)^{3/4}}
	\end{align}
	with probability greater than or equal to $1-2\varepsilon+\varepsilon^2$, where $e_Q^{[n]} = Q^{[n]}-\hat{Q}^{[n]}$ is the error in a numerically computed sample of the QoI.
\end{theorem}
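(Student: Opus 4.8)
The plan is to work from the additive splitting
\[
F(t)-\hat F_N(t) = \bigl(F(t)-F_N(t)\bigr) + \bigl(F_N(t)-\hat F_N(t)\bigr),
\]
recorded just before the statement, and to bound the two pieces by entirely different means: the \emph{sampling} error $F(t)-F_N(t)$ probabilistically (via Chebyshev's inequality), and the \emph{discretization} error $F_N(t)-\hat F_N(t)$ deterministically. Adding the two bounds and absorbing lower-order contributions should reproduce the three displayed terms of \eqref{CDFerr}, while the probability $1-2\varepsilon+\varepsilon^2=(1-\varepsilon)^2$ will come from intersecting the favorable events used in the probabilistic step.

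I would dispatch the discretization term first, since it is purely combinatorial. Comparing \eqref{nomCDF} and \eqref{approxCDF}, the summand $\mathds{1}(Q^{[n]}\le t)-\mathds{1}(\hat Q^{[n]}\le t)$ is nonzero only when exactly one of $Q^{[n]},\hat Q^{[n]}$ lies at or below $t$, i.e.\ only when $t$ falls between them. Since $|Q^{[n]}-\hat Q^{[n]}|=|e_Q^{[n]}|$, this forces $t\in[\hat Q^{[n]}-|e_Q^{[n]}|,\,\hat Q^{[n]}+|e_Q^{[n]}|]$, so that
\[
\bigl|\mathds{1}(Q^{[n]}\le t)-\mathds{1}(\hat Q^{[n]}\le t)\bigr|
\le \mathds{1}\bigl(\hat Q^{[n]}-|e_Q^{[n]}|\le t\le \hat Q^{[n]}+|e_Q^{[n]}|\bigr).
\]
Summing over $n$ and dividing by $N$ yields $|F_N(t)-\hat F_N(t)|\le \tfrac1N\Sigma$, where $\Sigma$ denotes the counting sum appearing in the statement. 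This produces the $\tfrac1N\Sigma$ part of the middle term, and it is exactly here that the per-sample QoI errors $e_Q^{[n]}$ (estimated through Theorem \ref{thm:error_1st_via_Taylor} or the iterative scheme) enter.

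For the sampling term I would note that, for fixed $t$, the indicators $\mathds{1}(Q^{[n]}\le t)$ are i.i.d.\ Bernoulli variables with mean $F(t)$, so $F_N(t)$ is unbiased for $F(t)$ with variance $F(t)(1-F(t))/N$. Chebyshev's inequality then gives, with probability at least $1-\varepsilon$,
\[
|F(t)-F_N(t)| \le \Bigl(\tfrac{F(t)(1-F(t))}{N\varepsilon}\Bigr)^{1/2}.
\]
The obstacle is that the variance $F(t)(1-F(t))$ is not computable and must be replaced by the observable $\hat F_N(t)(1-\hat F_N(t))$. I would exploit that $x\mapsto x(1-x)$ is $1$-Lipschitz on $[0,1]$ to migrate $F\to F_N\to \hat F_N$: the first step costs $|F-F_N|$, which on the favorable event is at most $(4N\varepsilon)^{-1/2}$ (using $F(1-F)\le\tfrac14$), and the second costs $|F_N-\hat F_N|\le\tfrac1N\Sigma$ from the discretization bound. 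Hence $F(1-F)\le \hat F_N(1-\hat F_N)+\tfrac1N\Sigma+(4N\varepsilon)^{-1/2}$, and the subadditivity $\sqrt{a+b+c}\le\sqrt a+\sqrt b+\sqrt c$ distributes this into the leading term $\bigl(\hat F_N(1-\hat F_N)/(N\varepsilon)\bigr)^{1/2}$, a term bounded by $\tfrac{1}{N\sqrt\varepsilon}\Sigma$ (after $\sqrt\Sigma\le\Sigma$), and a residual of order $(N\varepsilon)^{-3/4}$ dominated by $2(2N\varepsilon)^{-3/4}$.

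I expect the delicate part to be precisely this variance substitution: it is what couples the two error sources, it is the origin of the non-obvious $3/4$ exponent, and its exact constants depend on how the replacement is organized. One must decide whether $F(1-F)$ is controlled using the \emph{same} Chebyshev event or a \emph{second} concentration estimate for the empirical variance; the stated probability $(1-\varepsilon)^2$ indicates the latter, with the two favorable events intersected (an independence/product argument giving the extra $\varepsilon^2$ beyond the $1-2\varepsilon$ afforded by a bare union bound). Collecting the leading sampling term, the deterministic $\tfrac1N\Sigma$, the substitution-induced $\tfrac{1}{N\sqrt\varepsilon}\Sigma$, and the $(N\varepsilon)^{-3/4}$ residual then reproduces \eqref{CDFerr}.
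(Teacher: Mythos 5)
Your proof is correct, and its skeleton --- the splitting into sampling and discretization parts, the combinatorial bound $|F_N(t)-\hat F_N(t)|\le \tfrac1N\Sigma$ (where $\Sigma$ denotes the counting sum in the statement), and Chebyshev's inequality for the sampling part --- coincides with the paper's. Where you genuinely depart from the paper is the variance-substitution step, and your version is leaner. The paper writes $F(1-F)=F_N(1-F_N)+(F-F_N)(1-F-F_N)$, applies Young's inequality with a free parameter $\delta$, invokes a \emph{second} concentration inequality $|F-F_N|\le (2N\varepsilon)^{-1/2}$ (holding with probability $\ge 1-\varepsilon$, cited to Serfling) to absorb the leftover $\delta|F-F_N|$ term, and then optimizes $\delta=(2N\varepsilon)^{-1/4}$; that machinery is the source of both the $2(2N\varepsilon)^{-3/4}$ residual and the two-event probability $(1-\varepsilon)^2$. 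You instead exploit that $x\mapsto x(1-x)$ is $1$-Lipschitz on $[0,1]$ and control the substitution cost $|F-F_N|\le(4N\varepsilon)^{-1/2}$ on the \emph{same} Chebyshev event (using $F(1-F)\le\tfrac14$), so no second concentration bound and no optimization are needed; your residual $2^{-1/2}(N\varepsilon)^{-3/4}$ is dominated by the paper's $2^{1/4}(N\varepsilon)^{-3/4}$, both arguments use $\sqrt{\Sigma}\le\Sigma$ for the integer-valued count, and your middle term $\bigl(\tfrac1N+\tfrac{1}{N\varepsilon^{1/2}}\bigr)\Sigma$ matches the statement. Your route buys two things: conditioning on a single event gives the conclusion with probability $\ge 1-\varepsilon$, which is stronger than (and implies) the stated $1-2\varepsilon+\varepsilon^2$; and it sidesteps a delicate point in the paper, whose intersection of two events each of probability $\ge 1-\varepsilon$ yields only $1-2\varepsilon$ by the union bound, so that the claimed $(1-\varepsilon)^2$ implicitly assumes an independence that is not justified. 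Consequently, your closing speculation that the stated probability ``indicates'' a second concentration estimate for the empirical variance undersells your own argument: no second estimate is required, and your single-event version is simpler and strictly stronger. What the paper's heavier template buys is essentially continuity with the prior UQ literature it cites, where the identity-plus-$\delta$-optimization pattern is the standard device.
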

	
	\begin{proof}
		We decompose the error as
		\begin{equation}
        \label{eqcdf_err_decomp_a}
		\left|F(t)-\hat{F}_N(t) \right| \leq \left|F(t) - F_N(t) \right|+ \left|F_N(t)-\hat{F}_N(t)  \right| = I+II.
		\end{equation}
		Focusing on the term $II = \left|F_N(t)-\hat{F}_N(t)  \right| = \left|\hat{F}_N(t)-F_N(t)  \right|$,
		\begin{align}
        \label{bound2}
		II &= \left|\frac{1}{N}\sum_{n=1}^N\left( \mathds{1}(\hat{Q}^{[n]}\leq t) - \mathds{1}(Q^{[n]}\leq t)  \right) \right| =\left|\frac{1}{N}\sum_{n=1}^N\left( \mathds{1}(\hat{Q}^{[n]}\leq t) - \mathds{1}(\hat{Q}^{[n]}+e_Q^{[n]}\leq t)  \right) \right|, \nonumber \\
		&=\left|\frac{1}{N}\sum_{\substack{n=1 \\ e_Q^{[n]}\leq 0 }}^N\left( \mathds{1}(\hat{Q}^{[n]}-\left|e_Q^{[n]} \right| \leq t \leq \hat{Q}^{[n]} ) \right) + \frac{1}{N}\sum_{\substack{n=1 \\ e_Q^{[n]}> 0 }}^N\left( \mathds{1}(\hat{Q}^{[n]} \leq t \leq \hat{Q}^{[n]} +\left|e_Q^{[n]} \right| ) \right) \right|, \nonumber \\
		&\leq \left|\frac{1}{N}\sum_{n=1}^N\left( \mathds{1}(\hat{Q}^{[n]}-\left|e_Q^{[n]} \right| \leq t \leq \hat{Q}^{[n]} ) \right) + \frac{1}{N}\sum_{n=1}^N\left( \mathds{1}(\hat{Q}^{[n]} \leq t \leq \hat{Q}^{[n]} +\left|e_Q^{[n]} \right| ) \right) \right|, \nonumber \\
		&= \left|\frac{1}{N}\sum_{n=1}^N\left( \mathds{1}(\hat{Q}^{[n]}-\left|e_Q^{[n]} \right| \leq t \leq \hat{Q}^{[n]}+\left|e_Q^{[n]} \right| ) \right) \right|,
		\end{align}
										
		Now consider the term $I=\left|F(t) - F_N(t) \right|$. We start with the Chebyshev Inequality:
		\begin{align*}
		P\left(\left|F(t)-F_N(t) \right|\geq ks \right) \leq \frac{1}{k^2}
		\end{align*}
		for any real number $k$, where $s^2$ is the variance of $F_N$ given by \cite{EstepUQ1,Serfling},
		\begin{equation*}
		s^2 = \frac{F(t)\left(1-F(t)\right)}{N}.
		\end{equation*}
		Choosing $\varepsilon=\frac{1}{k^2}$ leads to
		\begin{align}\label{bound_I}
		I=\left|F(t)-F_N(t) \right| \leq \left(\frac{F(t)\left(1-F(t) \right)}{N\varepsilon} \right)^{1/2},
		\end{align}
		with a probability greater than $1-\varepsilon$.
		Now,
		\begin{align}
        \label{eq:use_eq_cdf}
		F(t)\left(1-F(t)\right) = F_N(t)\left(1-F_N(t)\right) + \left(F(t)-F_N(t)\right)\left(1-F(t)-F_N(t)\right).
		\end{align}
		Taking absolute values in \eqref{eq:use_eq_cdf}, dividing by $N\varepsilon$, taking the square root, and using  $\sqrt{a+b}\leq \sqrt{a}+ \sqrt{b}$ for any $a,b\geq 0$,
								                \begin{align}
        \left| \frac{F(t)\left(1-F(t)\right)}{N\varepsilon}  \right|^{1/2} &\leq \left| \frac{F_N(t)\left(1-F_N(t)\right)}{N\varepsilon} \right| ^{1/2} +\left|\frac{\left(F(t)-F_N(t)\right)\left(1-F(t)-F_N(t)\right)}{N\varepsilon} \right| ^{1/2}.\label{this}
        \end{align}
        												        
                           Multiplying and dividing the second term on the right-hand side of \eqref{this} by $\sqrt{2}\delta$ and using the fact that $ab\leq \frac{a^2}{2}+\frac{b^2}{2} $,
                  \begin{align*}
        \left|\frac{\left(F(t)-F_N(t)\right)\left(1-F(t)-F_N(t)\right)}{N\varepsilon} \right| ^{1/2} &\leq \left|\delta^2\left(F(t)-F_N(t) \right)^2 + \frac{\left(1-F(t)-F_N(t)\right)^{2}}{4\delta^2N^2\varepsilon^2}  \right|^{1/2} \\
        &\leq \delta \left| F(t)-F_N(t) \right| + \frac{1}{2 \delta N \varepsilon},
        \end{align*}
		where we obtain the final line by observing that $\left(1-F(t)-F_N(t)\right)^2 \leq 1 $.
                Substituting back into \eqref{this} and combining with \eqref{bound_I}, 		
		\begin{align}\label{nearly_delta_bound}
		I &\leq \left( \frac{F_N(t)\left(1-F_N(t)\right)}{N\varepsilon} \right) ^{1/2}  +\delta\left| F(t)-F_N(t) \right| + \frac{1}{2 \delta N \varepsilon}.
		\end{align}
				From \cite{Serfling}, for any $\varepsilon>0$ we have with a probability greater than $1-\varepsilon$,
		\begin{align}\label{log_bound}
		I \leq \left(\dfrac{\log(\varepsilon^{-1})}{2N}\right)^{1/2} \leq \left(\dfrac{1}{2N\varepsilon}\right)^{1/2},
		\end{align}
										where we also used that $\log(x)\leq x$ for all $x>0$. Substituting this into the right-hand side of \eqref{nearly_delta_bound},
		\begin{align}\label{nearly_alt}
		I &\leq \left( \frac{F_N(t)\left(1-F_N(t)\right)}{N\varepsilon} \right) ^{1/2}  +\delta\left(\dfrac{1}{2N\varepsilon}\right)^{1/2} + \frac{1}{2\delta N \varepsilon}.
		\end{align}
		Consider the function
		\begin{equation*}
		        D(\delta)=\frac{\delta}{(2N\varepsilon)^{1/2}}+\frac{1}{\delta (2N\varepsilon)},
		\end{equation*}
		        Elementary calculus shows that the minimum of $D(\delta)$, for $\delta > 0$, occurs at $\delta = \left(\frac{1}{2N\varepsilon}\right)^{1/4}$.
  								        With this choice of $\delta$, \eqref{nearly_alt} becomes
		\begin{align}
		I &\leq \left( \frac{F_N(t)\left(1-F_N(t)\right)}{N\varepsilon} \right) ^{1/2}  + \frac{2}{\left(2N\varepsilon \right)^{3/4}}.
        \label{eq:ineq_diff_true_nom_cdf}
		\end{align}
		The numerator of the first term  in \eqref{eq:ineq_diff_true_nom_cdf} is expanded as
		\begin{align}
        \label{eq:help_eqn_1000}
		\left| F_N(t)\left(1-F_N(t)\right)\right| &= \left|\hat{F}_N(t)\left(1-\hat{F}_N(t) \right) + \left(F_N(t)-\hat{F}_N(t) \right)\left(1-F_N(t)-\hat{F}_N(t) \right)\right| \nonumber \\
		&\leq \left|\hat{F}_N(t)\left(1-\hat{F}_N(t) \right)\right| + \left| \left(F_N(t)-\hat{F}_N(t) \right)\left(1-F_N(t)-\hat{F}_N(t) \right)\right|.
		\end{align}
        Using  $\left|1-F_N(t)-\hat{F}_N(t) \right| \leq 1 $ in \eqref{eq:help_eqn_1000} together with \eqref{bound2} and \eqref{eq:ineq_diff_true_nom_cdf},

		\begin{align}\label{bound1}
		I &\leq \left(\frac{\hat{F}_N(t)\left(1-\hat{F}_N(t) \right)}{N\varepsilon} \right)^{1/2} + \frac{1}{N\varepsilon^{1/2}}\left(\left|\sum_{n=1}^N\left( \mathds{1}(\hat{Q}^{[n]}-\left|e_Q^{[n]} \right| \leq t \leq \hat{Q}^{[n]}+\left|e_Q^{[n]} \right| ) \right) \right| \right)^{1/2}+\frac{2}{\left(2N\varepsilon\right)^{3/4}}\nonumber ,\\
        &\leq \left(\frac{\hat{F}_N(t)\left(1-\hat{F}_N(t) \right)}{N\varepsilon} \right)^{1/2} + \frac{1}{N\varepsilon^{1/2}}\left(\left|\sum_{n=1}^N\left( \mathds{1}(\hat{Q}^{[n]}-\left|e_Q^{[n]} \right| \leq t \leq \hat{Q}^{[n]}+\left|e_Q^{[n]} \right| ) \right) \right| \right)+\frac{2}{\left(2N\varepsilon\right)^{3/4}},
		\end{align}
		where we also used $\sqrt{x} \leq x$ if $x = 0$ or $x \geq 1$.
        		Since \eqref{bound1} relies on both \eqref{bound_I} and \eqref{log_bound}, this bound occurs with a probability of at least $(1-\varepsilon)^2 = 1-2\varepsilon+\varepsilon^2$. Combining
        \eqref{bound2} and \eqref{bound1} with \eqref{eqcdf_err_decomp_a} completes the proof.
		
	\end{proof}
	
\end{color}
The estimate \eqref{eq:Taylor_computable} is used to approximate $\eta^{[n]} \approx e_Q^{[n]}$. The first term on the right-hand side of the bound \eqref{CDFerr} quantifies the error contribution from finite sampling, while the second term represents error due to discretization.

\section{Numerical examples}
\label{sec:results}
This section considers a wide range of types of linear and nonlinear ODEs in order to explore the accuracy of the estimates.

Since the Crank-Nicolson finite difference scheme is nodally equivalent to the cG(1) finite element method with a trapezoidal rule quadrature, given $t_i < t_c < t_{i+1}$,  the numerical QoI may be computed by using linear interpolation as,
\begin{equation*}\label{tnum}
    t_c = \frac{R(t_i-t_{i+1})}{Y(t_i)-Y(t_{i+1})} - \frac{t_iY(t_i)-t_{i+1}Y(t_{i+1})}{Y(t_i)-Y(t_{i+1})}.
\end{equation*}

When implementing the secant method \eqref{secant}, the two grid-points closest to the QoI are used as initial guesses:
\begin{equation}
    x_0 = t_L \text{ and } x_1=t_R,
\end{equation}
where $t_L < t_c <t_R$,  with no other grid-points in between.
For the inverse quadratic interpolation scheme \eqref{invquad}, the initial guesses are the two closest grid-points to the left of the QoI and one to the right:
\begin{equation}
    x_0 = t_{LL}, \;  x_1=t_L \text{ and } x_2=t_R,
\end{equation}
where $t_{LL} < t_L < t_c <t_R$, with no other grid-points in between.
For most examples the adjoint solutions, needed for the estimates~\eqref{eq:taylor_numerator_estimate}, \eqref{eq:taylor_denominator_estimate} and \eqref{eq:Taylor_computable}, are computed using the cG(3) method with 100 finite elements, with the exceptions of \S \ref{sec:example_heat} where cG(3) is used with 40 elements and \S \ref{sec:UQLorenz} where cG(2) with 100 elements is used.  For all methods define $n_{\rm adj}$ to be the number of adjoint solutions required to compute the error in the QoI. This number can be seen as the relative cost of implementing the different methods.

\subsection{Linear problem}
\label{sec:example_linear}

We consider the initial value problem
\begin{equation*}\label{LinEx1}
\dot{y} = \sin(2\pi t)y, \quad \ t \in (0,1], \quad y(0) =1, \\
\end{equation*}
with analytic solution
$$
y(t) = \exp\left( \frac{1}{2 \pi}(1-\cos(2 \pi t)) \right).
$$
Let $R=1.3$ and $S(y(t))=y(t)$. The true QoI is given by
\begin{equation*}
    t_t = Q(y) = \min\limits_{t \in (0,1]}  \text{arg}( y(t)=1.3 )
        = \frac{1}{2 \pi}(\arccos(-2 \pi \ln(1.3)+1)).
\end{equation*}
For this problem, the terms in \eqref{eq:taylor_eta_approx} are
\begin{equation*}
v=1, \; f(y,t)=\sin(2\pi t)y, \; \nabla_y f(y,t)=\sin(2\pi t),
\end{equation*}
hence, for \eqref{eq:taylor_numerator_estimate}, \eqref{eq:taylor_denominator_estimate}, and \eqref{eq:iterative_est} the values needed are
\begin{equation*}
\psi_1=-1, \; \psi_2=\sin(2\pi t_c), \; \psi_3=1.
\end{equation*}
The true solution and QoI are shown in Figure \ref{fig:example_linear}.  This graph includes a horizontal line at $S(y(t))=R$, to indicate the threshold value of interest, as well as a vertical line denoting the true value of the QoI, i.e. the first time the threshold is crossed. Figure   \ref{fig:example_linear} compares the numerical QoI to the true value for both the numerical schemes. True errors, error estimates and effectivity ratios are provided in Tables  \ref{tab:lin_cG} and \ref{tab:lin_CN}. All methods provide excellent effectivity ratios, but the iterative methods require many more applications of Theorem \ref{thm:basic_error} and hence require solving more adjoint problems of the form \eqref{eq:ivp_adjoint}, as shown by the values of $\eta_{adj}$.

\begin{figure}[h!]
    \centering
    \subfloat[Comparing cG(1) solution and computed QoI\eqref{eq:QoI} to the true values for example in \S \ref{sec:example_linear}.]{\includegraphics[width=7cm]{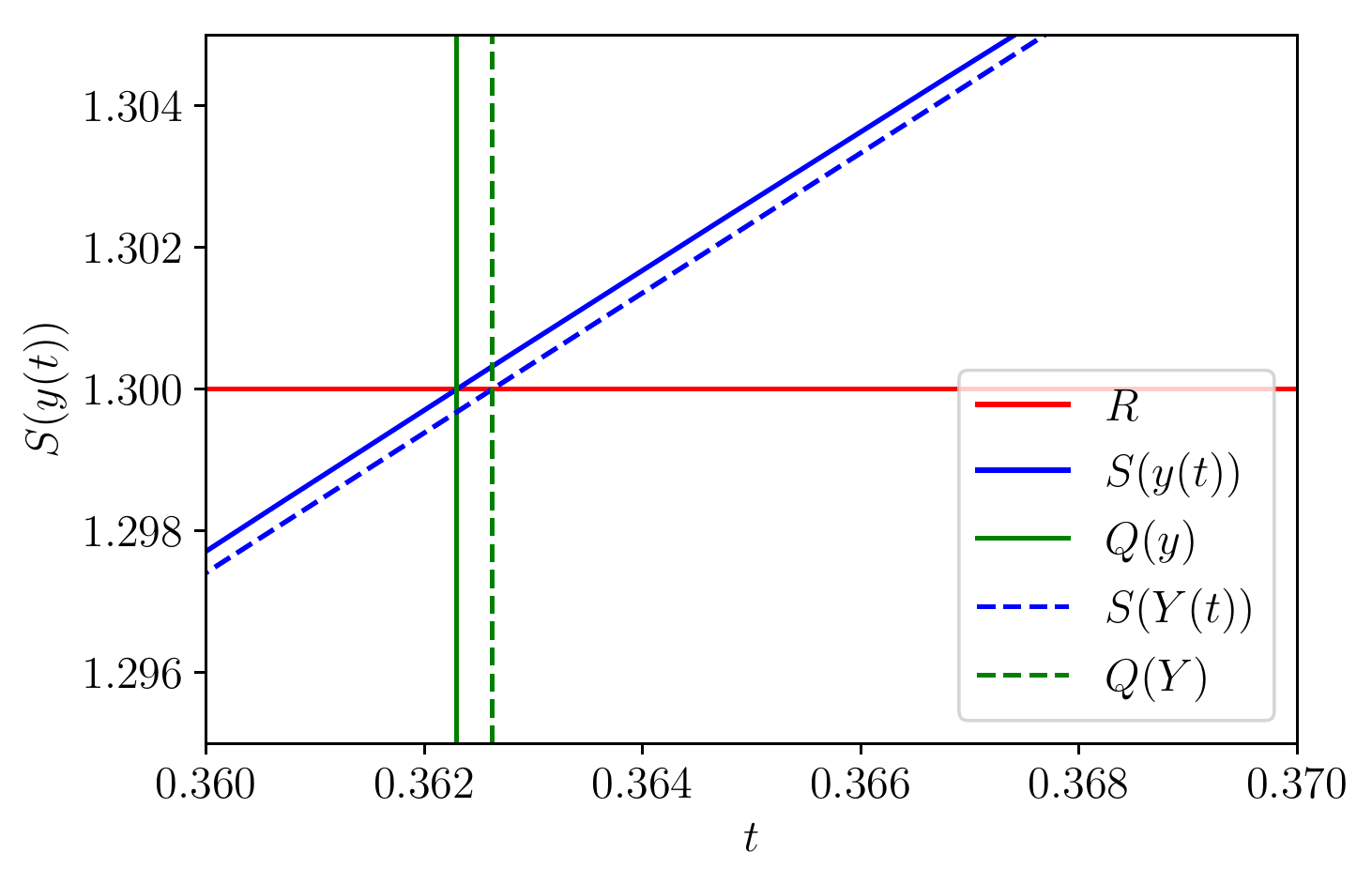}}
    \hfill
    \subfloat[Comparing Crank-Nicolson solution and computed QoI \eqref{eq:QoI} to the true values for example in \S \ref{sec:example_linear}.]{\includegraphics[width=7cm]{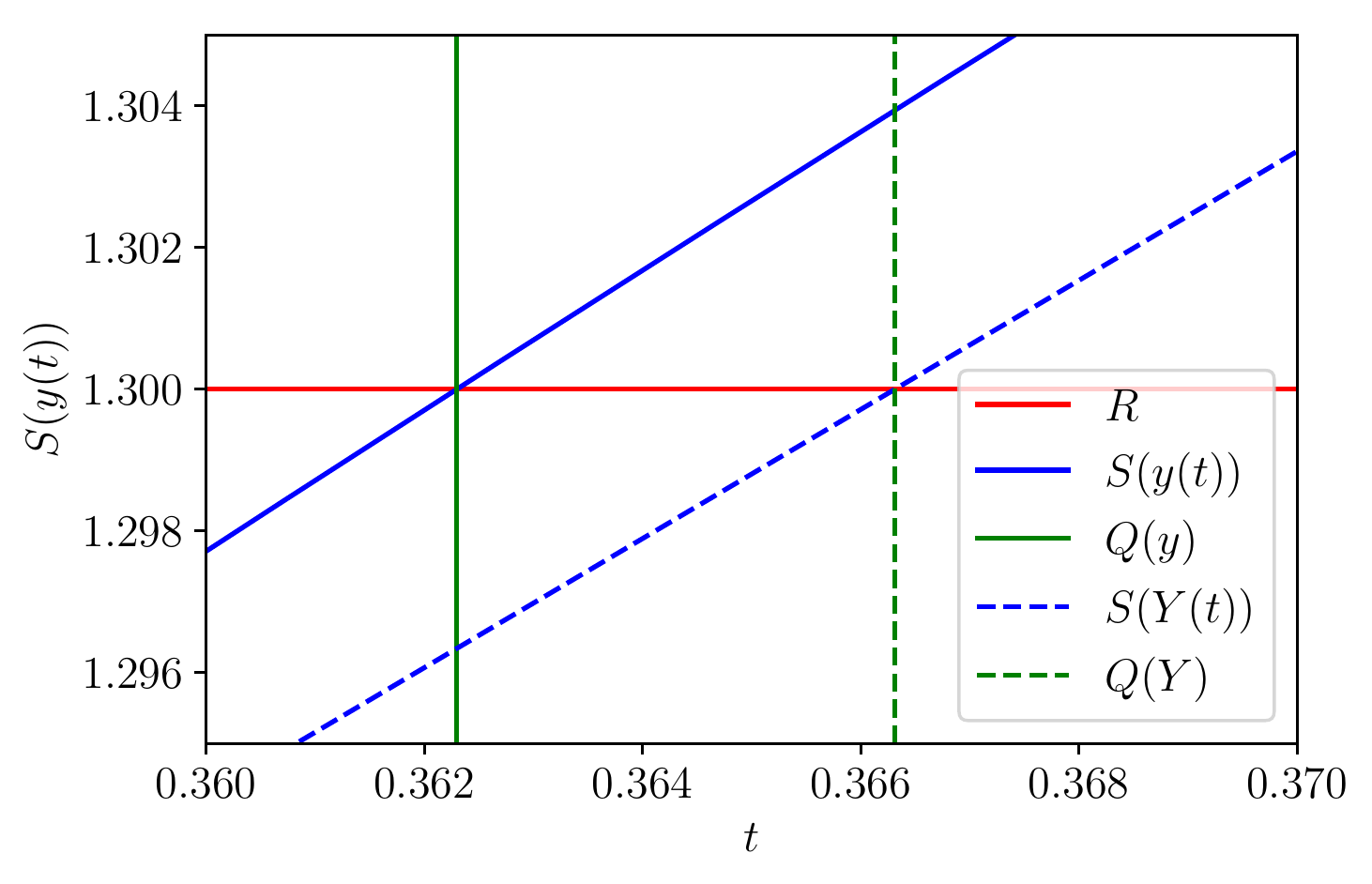}}
    \caption{}
    \label{fig:example_linear}
\end{figure}

\begin{table}[h!]
\centering
\caption{Results of the different methods on the example in \S \ref{sec:example_linear} using cG(1) with 40 elements.}
  \begin{tabular}{||l||c||c|c|c||c|c|c|c||}
  \hline
  Method & $t_c$ & $t_{LL}$ & $t_L$ & $t_R$ & $e_Q$ & $\eta$ & $\rho_{\rm eff}$ & $n_{\rm adj}$ \\
  \hline
  Taylor series  & 0.3626& -- & -- & -- & -3.267e-04 & -3.269e-04 & 1.000 & 2 \\
  Secant         &0.3626 & --  & 0.35  & 0.375 & -3.267e-04 & -3.267e-04 & 1.000 & 6 \\
  Inverse quad.  & 0.3626& 0.325 & 0.35 & 0.375 & -3.267e-04 & -3.267e-04 & 1.000 & 7 \\
  \hline
  \end{tabular}
\label{tab:lin_cG}
\end{table}

\begin{table}[h!]
\centering
  \caption{Results of the different methods on the example in \S \ref{sec:example_linear} using Crank-Nicolson with 21 nodes.}
  \begin{tabular}{||l||c||c|c|c||c|c|c|c||}
  \hline
  Method & $t_c$ & $t_{LL}$ & $t_L$ & $t_R$ & $e_Q$ & $\eta$ & $\rho_{\rm eff}$ & $n_{\rm adj}$ \\
  \hline
    Taylor series  & 0.3663& -- & -- & -- & -4.017e-03 & -4.056e-03 & 1.010 & 2 \\
    Secant         & 0.3663& -- & 0.35  & 0.4 & -4.017e-03 & -4.017e-03 & 1.000 & 7 \\
    Inverse quad. & 0.3663& 0.3 & 0.35 & 0.4 & -4.017e-03 & -4.017e-03 & 1.000 & 7 \\
  \hline
  \end{tabular}
  \label{tab:lin_CN}
\end{table}

\subsection{Nonlinear problem}
\label{sec:example_nonlinear}

Next we consider the nonlinear initial value problem
\begin{equation*}
      \dot{y}(t) = \sin(2\pi y(t)), \; t \in (0,1], \quad  y(0)=\frac{1}{4}.
\end{equation*}
The analytic solution to this problem is
\begin{equation*}
y(t)=\frac{1}{\pi} \arctan (e^{2 \pi t}).
\end{equation*}
Let $R=0.4$ and $S(y(t)) = y(t)$. The true QoI is
\begin{equation*}
    t_t = Q(y) = \min\limits_{t \in [0,1]} \text{arg} (y(t)=0.4)
        = \frac{\ln(\tan(0.4\pi))}{2\pi}.
\end{equation*}
Here, the terms in \eqref{eq:taylor_eta_approx} are
\begin{equation*}
v=1, \; f(y,t)=\sin(2\pi y), \; \nabla_y f(y,t)=2\pi\cos(2\pi y),
\end{equation*}
so the data needed for \eqref{eq:taylor_numerator_estimate}, \eqref{eq:taylor_denominator_estimate}, and \eqref{eq:iterative_est} are
\begin{equation*}
\psi_1= -1, \; \psi_2=  2\pi \cos(2\pi R), \; \psi_3= 1.
\end{equation*}
Figure \ref{fig:true2} shows the true values of the linear functional $S(y(t))$ as well as the event in question and the true QoI. The values in Tables  \ref{tab:nonlin_CG} and \ref{tab:nonlin_CN} indicate that all three methods are fairly accurate. The two iterative methods again require more adjoint equations to be solved.

\begin{figure}[h!]
    \centering
    \subfloat[Chosen value of R, true data $S(y(t)$, and true QoI for example in \S \ref{sec:example_nonlinear}.]{\includegraphics[width=7cm]{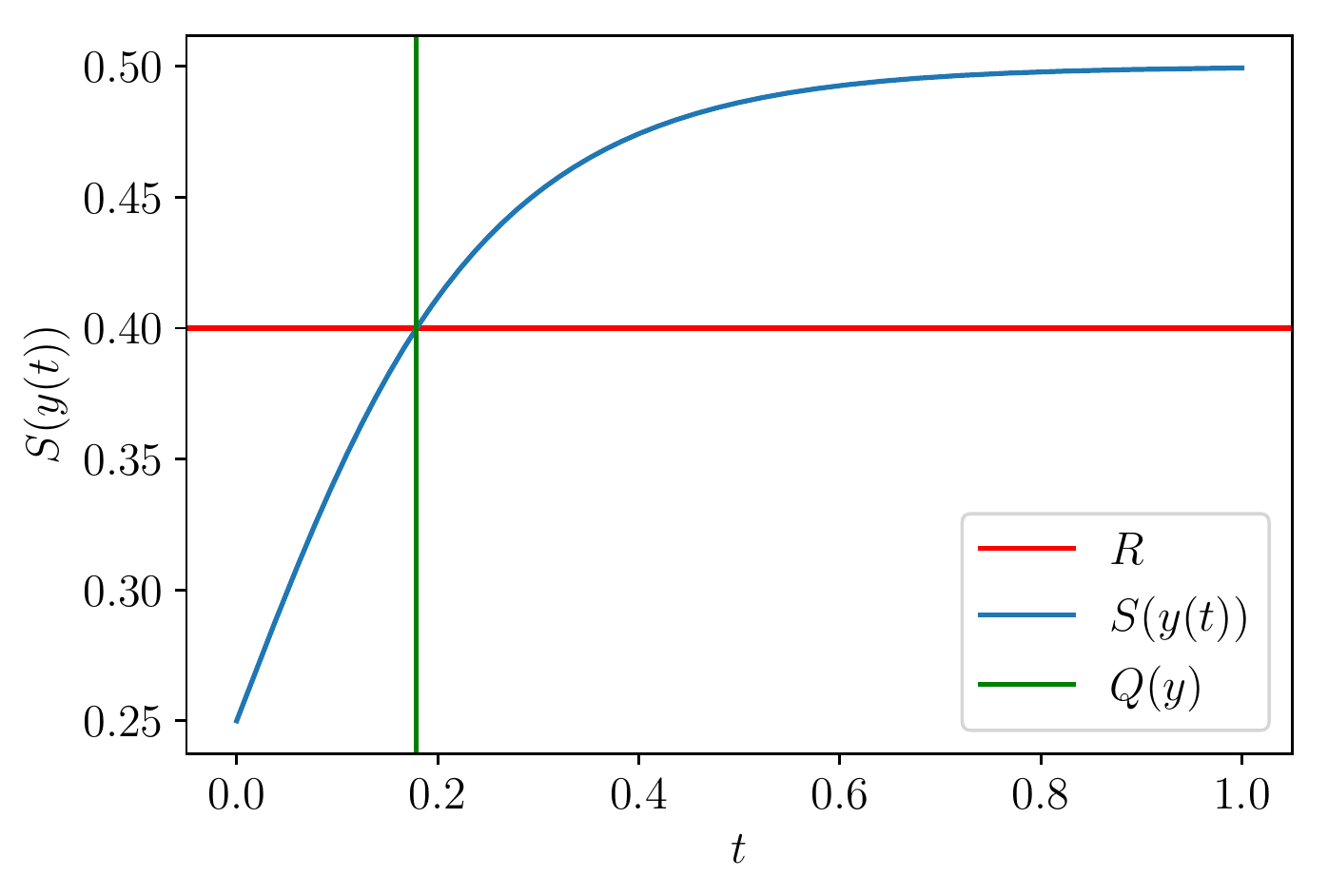} \label{fig:true2}}
    \hfill
    \subfloat[Chosen value of R, true data $S(y(t)$, and true QoI for example in \S \ref{sec:example_linear_system}]{\includegraphics[width=7cm]{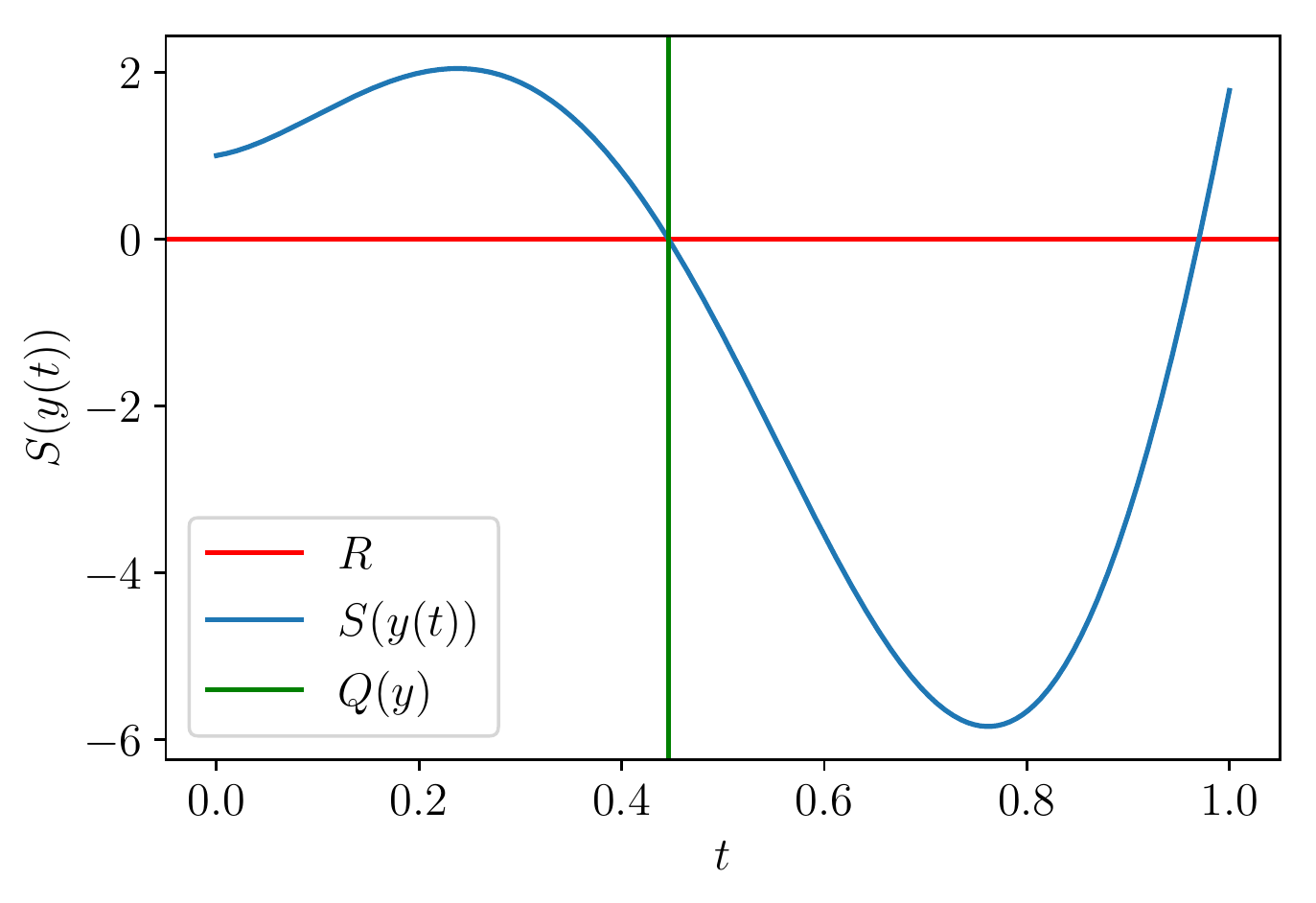} \label{fig:true3}}
    \caption{}

\end{figure}

\begin{table}[h!]
\centering
  \caption{Results for \S \ref{sec:example_nonlinear} using the different methods on cG(1) solution with 40 elements.}
  \begin{tabular}{||l||c||c|c|c||c|c|c|c||}
  \hline
  Method & $t_c$ & $t_{LL}$ & $t_L$ & $t_R$ & $e_Q$ & $\eta$ & $\rho_{\rm eff}$ & $n_{\rm adj}$ \\
  \hline
  Taylor series  & 0.1790& --  & -- & -- & -1.087e-04 & -1.086e-04 & 1.000 & 2 \\
  Secant         & 0.1790& -- & 0.175 & 0.2 & -1.087e-04 & -1.087e-04 & 1.000 & 6 \\
  Inverse quad. & 0.1790& 0.15  & 0.175 & 0.2 & -1.087e-04 & -1.087e-04 & 1.000 & 6 \\
  \hline
  \end{tabular}
  \label{tab:nonlin_CG}
\end{table}

\begin{table}[h!]
\centering
  \caption{Results for \S \ref{sec:example_nonlinear} using the different methods on Crank-Nicolson solution with 21 nodes.}
  \begin{tabular}{||l||c||c|c|c||c|c|c|c||}
  \hline
  Method & $t_c$ & $t_{LL}$ & $t_L$ & $t_R$ & $e_Q$ & $\eta$ & $\rho_{\rm eff}$ & $n_{\rm adj}$ \\
  \hline
  Taylor series &0.1810 & -- & --& --& -2.156e-03 & -2.141e-03 & 1.007 & 2 \\
  Secant        &0.1810 & -- & 0.15 & 0.2 & -2.156e-03 & -2.144e-03 & 1.001 & 7 \\
  Inverse quad. & 0.1810& 0.1  & 0.15 & 0.2 & -2.156e-03 & -2.144e-03 & 1.001 & 7 \\
  \hline
  \end{tabular}
  \label{tab:nonlin_CN}
\end{table}

\subsection{Linear system}
\label{sec:example_linear_system}

We consider the  two dimensional system  $\dot{y} + A(t)y = 0$,
\begin{equation*}
\begin{pmatrix}
  \dot{y_1}(t) \\ \dot{y_2}(t)
\end{pmatrix}
+
\begin{pmatrix}
1 + 9\cos^2(6t)-6\sin(12t) & -12\cos^2(6t)-9/2\sin(12t) \\
12\sin^2(6t)-9/2\sin(12t)  &  1 + 9\sin^2(6t)+6\sin(12t)
\end{pmatrix}
\begin{pmatrix}
y_1(t)  \\ y_2(t)
\end{pmatrix}
=
\begin{pmatrix}
0 \\ 0
\end{pmatrix},
\ t \in (0,1],
\end{equation*}
with initial conditions $y_1(0)=y_2(0)=1$. The analytic solution to this problem is
\begin{equation*}
\begin{pmatrix}
y_1(t) \\ y_2(t)
\end{pmatrix}
=
\begin{pmatrix}
3/5 \exp(2t)(\cos(6t)+2\sin(6t))-1/5 \exp(-13t)(\sin(6t)-2\cos(6t)) \\
3/5 \exp(2t)(2\cos(6t)-\sin(6t))-1/5 \exp(-13t)(\cos(6t)+2\sin(6t))
\end{pmatrix}
.
\end{equation*}
Set $R=0$ and $S(y(t))=y_1(t)$ in order to analyze the first component. The true quantity of interest is
\begin{equation*}
    t_{t} := Q(y) = 0.446255366908554
\end{equation*}
The parameters needed for \eqref{eq:taylor_eta_approx} are
\begin{equation*}
v=(1,0)^\top, \; f(y,t)=-A(t)y, \; \nabla_yf(y,t)=-A(t).
\end{equation*}
For \eqref{eq:taylor_numerator_estimate}, \eqref{eq:taylor_denominator_estimate}, and \eqref{eq:iterative_est} the values needed are
\begin{equation*}
\psi_1= -(1,0)^\top, \;
\psi_2 = ( 1 + 9\cos^2(6t_c)-6\sin(12t_c), \, -12\cos^2(6t_c)-\frac{9}{2}\sin(12t_c) )^\top, \;
\psi_3=  (1,0)^\top.
\end{equation*}
The true solution and QoI are shown in Figure \ref{fig:true3}.
Tables \ref{tab:syst1_cG} and \ref{tab:syst1_CN} show the results for cG(1) and Crank-Nicolson respectively. Again, all methods are accurate using either numerical method. The two iterative methods require many more adjoint problems to be solved than the Taylor series method without  any increase in accuracy.
\begin{table}[h!]
\centering
  \caption{Results of the different methods on example in \S \ref{sec:example_linear_system} using cG(1) with 40 elements.}
  \begin{tabular}{||l||c||c|c|c||c|c|c|c||}
  \hline
  Method & $t_c$ & $t_{LL}$ & $t_L$ & $t_R$ & $e_Q$ & $\eta$ & $\rho_{\rm eff}$ & $n_{\rm adj}$ \\
  \hline
  Taylor series &0.4463 & --  & -- & -- & -1.323e-04 & -1.322e-04 & 0.999 & 2 \\
  Secant method & 0.4463& -- & 0.425 & 0.45 & -1.323e-04 & -1.323e-04 & 1.000 & 6 \\
  Inverse quad. &0.4463 & 0.4   & 0.425 & 0.45 & -1.323e-04 & -1.323e-04 & 1.000 & 8 \\
  \hline
  \end{tabular}
  \label{tab:syst1_cG}
\end{table}

\begin{table}[h!]
\centering
  \caption{Results of the different methods on example in \S \ref{sec:example_linear_system} using Crank-Nicolson with 21 nodes.}
  \begin{tabular}{||l||c||c|c|c||c|c|c|c||}
  \hline
  Method & $t_c$ & $t_{LL}$ & $t_L$ & $t_R$ & $e_Q$ & $\eta$ & $\rho_{\rm eff}$ & $n_{\rm adj}$ \\
  \hline
  Taylor series & 0.4462& -- & -- & -- & 2.675e-05 & 2.675e-05 & 1.000 & 2 \\
  Secant        & 0.4462& --  & 0.4 & 0.45 & 2.675e-05 & 2.675e-05 & 1.000 & 6 \\
  Inverse quad. & 0.4462& 0.35 & 0.4 & 0.45 & 2.675e-05 & 2.675e-05 & 1.000 & 8 \\
  \hline
  \end{tabular}
  \label{tab:syst1_CN}
\end{table}

\subsection{Harmonic oscillator}
\label{sec:example_harmonic}

Consider the harmonic oscillator
\begin{equation*}
\ddot{\omega}=
  -\frac{k}{m}\omega-\frac{c}{m}\dot{\omega}+\frac{F_0}{m}\cos(\gamma t +\theta_d), \ t \in (0,2], \quad \omega(0)=5, \ \dot{\omega}(0)=0.
\end{equation*}
with
\begin{equation*}
k=50, \ m=0.25, \ c=1, \ F_0=50, \ \theta_d=0, \ \gamma=10.
\end{equation*}
Rewriting as a system of first-order ODEs,  $\dot{y}+Ay=h(t)$, gives
\begin{equation*}
\begin{pmatrix}
\dot{y_1}(t) \\  \dot{y_2}(t)
\end{pmatrix}
+
\begin{pmatrix}
0    & -1 \\  200  & 4
\end{pmatrix}
\begin{pmatrix}
y_1(t) \\ y_2(t)
\end{pmatrix}
=
\begin{pmatrix}
0 \\  200\cos(10 t)
\end{pmatrix}.
\end{equation*}
Set $R=0$ and $S(y(t))=y_1(t)$ in order to observe when the oscillator first reaches the origin.
The true solution in \cite{Barger} is used to determine
\begin{equation*}
t_{t} := Q(\omega) = 0.14034864129073557.
\end{equation*}
Here for \eqref{eq:taylor_eta_approx}, the values needed are
\begin{equation*}
v=(1,0)^\top, \; f(y,t)=-Ay+h(t), \;  \nabla_y f(y,t)=-A.
\end{equation*}
To compute \eqref{eq:taylor_numerator_estimate}, \eqref{eq:taylor_denominator_estimate}, and \eqref{eq:iterative_est}, let
\begin{equation*}
\psi_1= -(1,0)^\top, \; \psi_2= (0,1)^\top, \; \psi_3= (1,0)^\top.
\end{equation*}
The true data $S(y(t))$ and QoI are given in Figure \ref{true4} and the results using cG(1) and Crank-Nicolson method are provided in Tables \ref{tab:osc_cG} and \ref{tab:osc_CN} respectively. All methods using either numerical method give effectivity ratios close to one. The two iterative methods require more adjoint problems to be solved than the Taylor series estimate, but they do lead to a slightly more accurate error estimate.
\begin{figure}[h!]
    \centering
    \subfloat[Chosen value of R, true data $S(y(t)$, and true QoI $Q(y)$ for example \ref{sec:example_harmonic}]{\includegraphics[width=7cm]{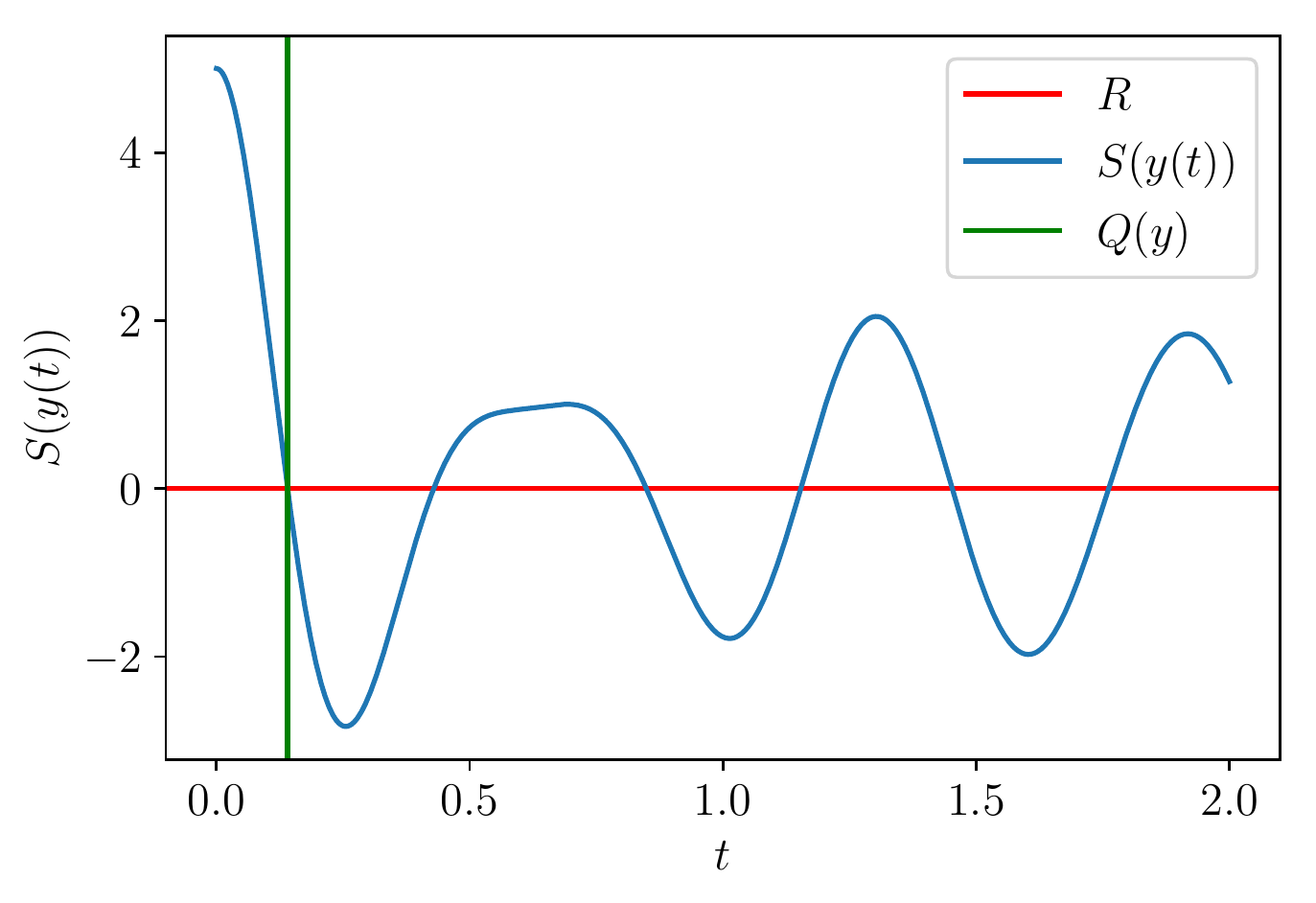} \label{true4} }
    \hfill
    \subfloat[Chosen value of R, true data $S(y(t))$, and true QoI $Q(y)$ for example in \S \ref{sec:example_harmonic_interval}]{\includegraphics[width=7cm]{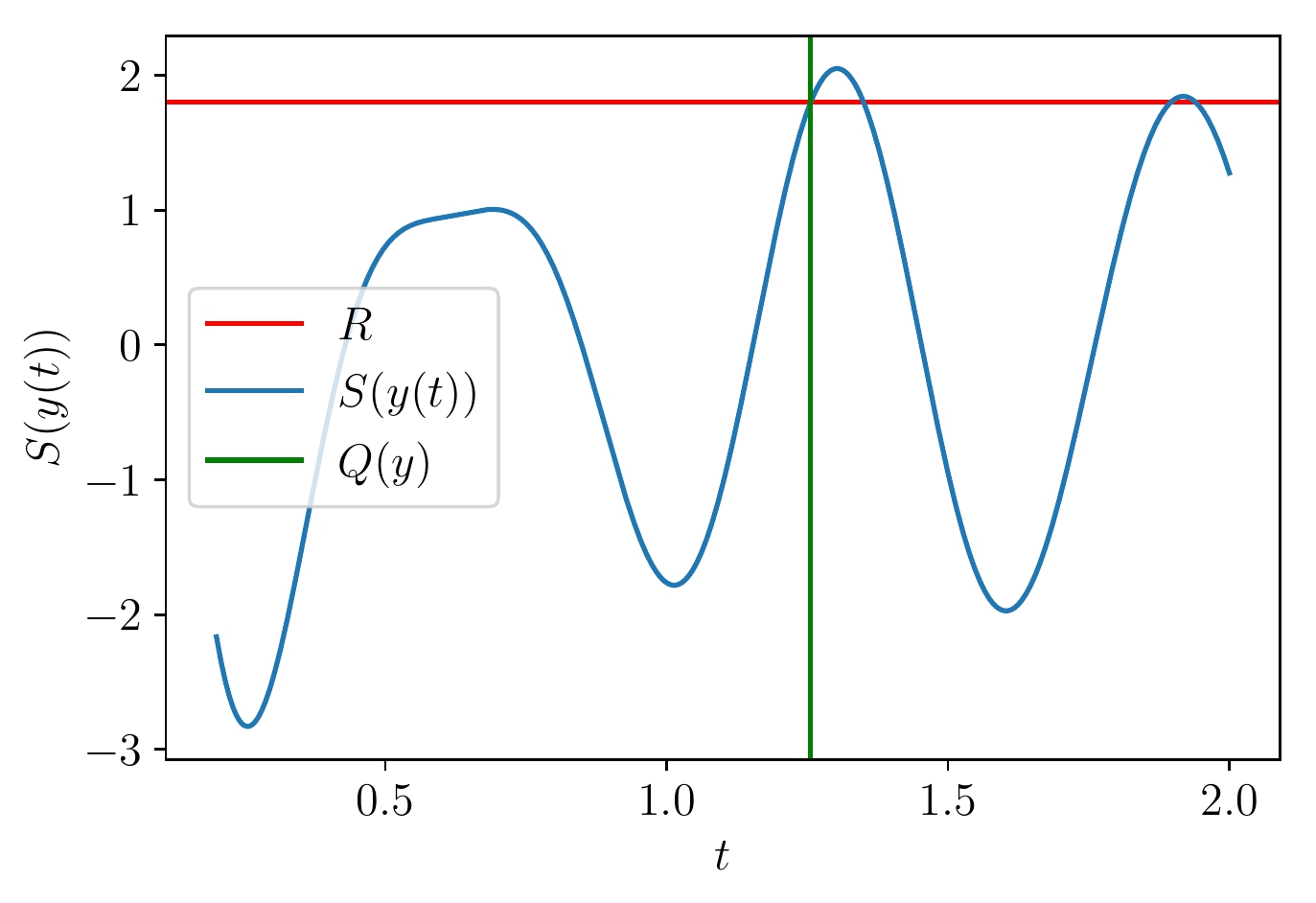} \label{fig:shiftosc}}

    \caption{}

\end{figure}

\begin{table}[h!]
\centering
  \caption{Results of the different methods on the example in \S \ref{sec:example_harmonic} using cG(1) with 40 elements.}
  \begin{tabular}{||l||c||c|c|c||c|c|c|c||}
  \hline
  Method & $t_c$ & $t_{LL}$ & $t_L$ & $t_R$ & $e_Q$ & $\eta$ & $\rho_{\rm eff}$ & $n_{\rm adj}$ \\
  \hline
  Taylor series &0.1447 & -- & -- & -- & -4.440e-03 & -4.449e-03 & 1.011 & 2 \\
  Secant method &0.1447 & --  & 0.1 & 0.15 & -4.440e-03 & -4.440e-03 & 1.000 & 7 \\
  Inverse quad. &0.1447 & 0.05 & 0.1 & 0.15 & -4.440e-03 & -4.440e-03 & 1.000 & 8 \\
  \hline
  \end{tabular}
  \label{tab:osc_cG}
\end{table}

\begin{table}[h!]
\centering
  \caption{Results of the different methods on the example in \S \ref{sec:example_harmonic} using Crank-Nicolson with 21 nodes.}
  \begin{tabular}{||l||c||c|c|c||c|c|c|c||}
  \hline
  Method & $t_c$ & $t_{LL}$ & $t_L$ & $t_R$ & $e_Q$ & $\eta$ & $\rho_{\rm eff}$ & $n_{\rm adj}$ \\
  \hline
  Taylor series &0.1575 &--  & -- & -- & -1.715-02 & -1.816e-02 & 1.059 & 2 \\
  Secant method & 0.1575& -- & 0.1 & 0.2 & -1.715-02 & -1.715e-02 & 0.999 & 8 \\
  Inverse quad. &0.1575 & 0.0 & 0.1 & 0.2 & -1.715-02 & -1.715e-02 & 0.999 & 10 \\
  \hline
  \end{tabular}
  \label{tab:osc_CN}
\end{table}

\subsubsection{Harmonic Oscillator: Effect of the choice of interval}
\label{sec:example_harmonic_interval}

We consider the same equation and function as in \S \ref{sec:example_harmonic}, except over the time interval $t \in (0.2, 2]$ and with $R=1.8$.

Applying the secant method to the true solution results in the true QoI,
\begin{equation*}
    t_t = 1.2558594599461572.
\end{equation*}

Since this problem has the same ODE and functional $S$ as in \S \ref{sec:example_harmonic}, the parameters and steps laid out in that section can be used to obtain the error estimates.

The true functional and QoI are shown in Figure \ref{fig:shiftosc} and the results when using the different methods in Tables \ref{tab:shiftosc_CG} and \ref{tab:shiftosc_CN}. The Taylor series method is slightly less accurate compared to the iterative methods when using the cG(1) method. This is due to the size of the second derivative of the functional near the event, leading to a larger absolute value of the remainder in \eqref{secondapprox}. Since the error estimate \eqref{eq:taylor_eta_approx} neglects this remainder, if its absolute value is too large the estimate will not be accurate. Examples in \S \ref{sec:example_harmonic_R} take a further look into this effect.

Both the Taylor series and iterative methods are poor for the Crank-Nicolson method. This is due to the low accuracy of the numerical solution as illustrated in Figure \ref{fig:shiftosc_CN}. The potential inaccuracy of the Taylor series estimate under these circumstances is discussed in Remark~\ref{rem:err_est_accuracy}. The root-finding methods are converging to the \emph{second} time the event occurs (which is 1.3237), rather than the first. Because of the small difference in time between the locations of the two roots (see Figure \ref{fig:iter_issues}), the proximity of the second root to the numerical QoI, and the size of the numerical time step, both roots are contained within the initial interval over which the iterative methods are applied. It is therefore possible for the iterative methods to converge to the larger of the two roots.

\begin{figure}[h!]
    \centering
    \subfloat[Figure detailing issue with iterative methods in \S \ref{sec:example_harmonic_interval} and \S \ref{sec:example_harmonic_R} when the numerical solution is not accurate near the event. The iterative methods result in $t^* = \eta_{it}$, which is the second occurrence of the event rather than the first. This figure specifically details the case when $R=2$. ]{\includegraphics[width=7cm]{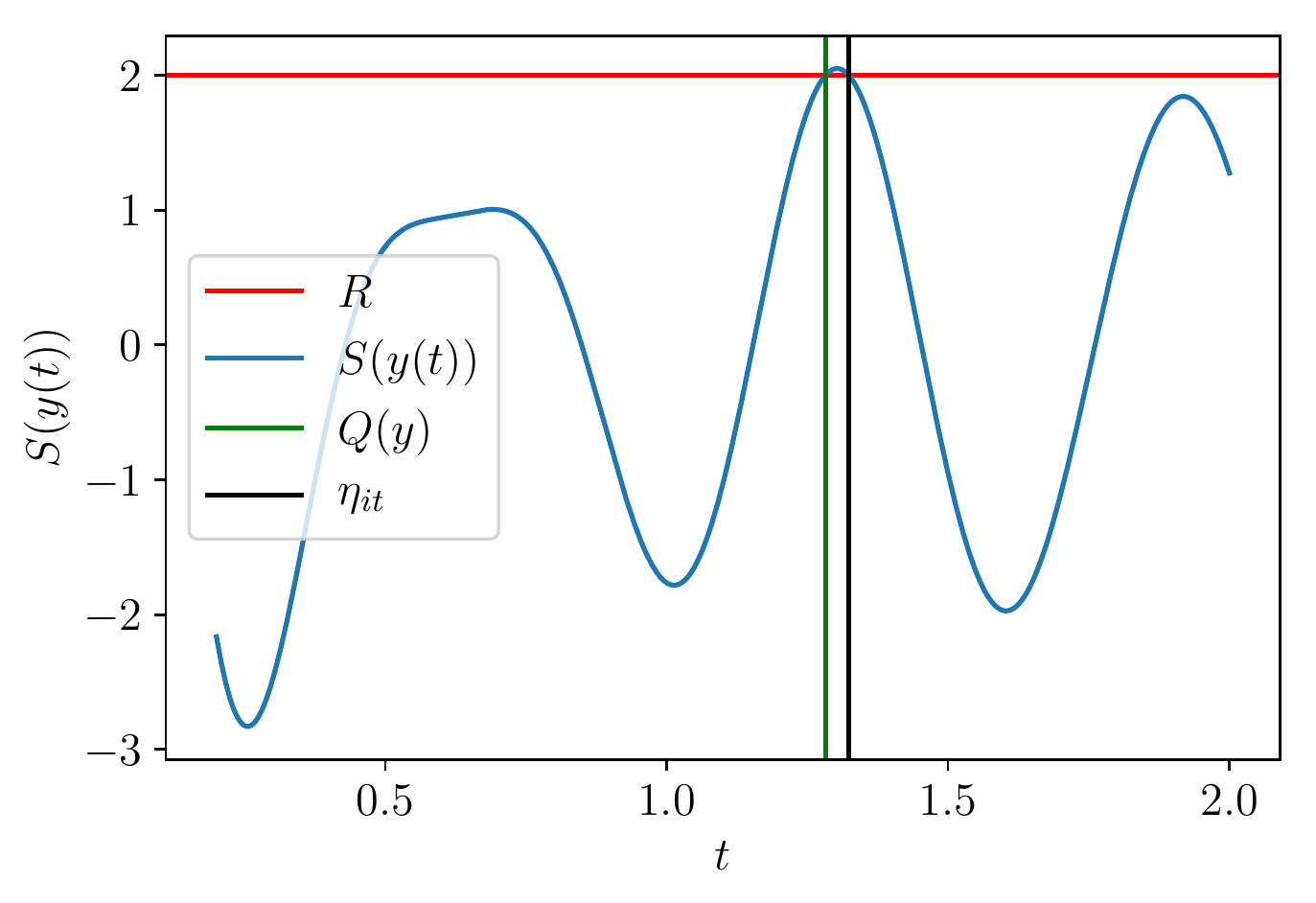} \label{fig:iter_issues}}
    \hfill
    \subfloat[Numerical values for example in \S \ref{sec:example_harmonic_interval} when using Crank-Nicolson method with 21 nodes.]{\includegraphics[width=7cm]{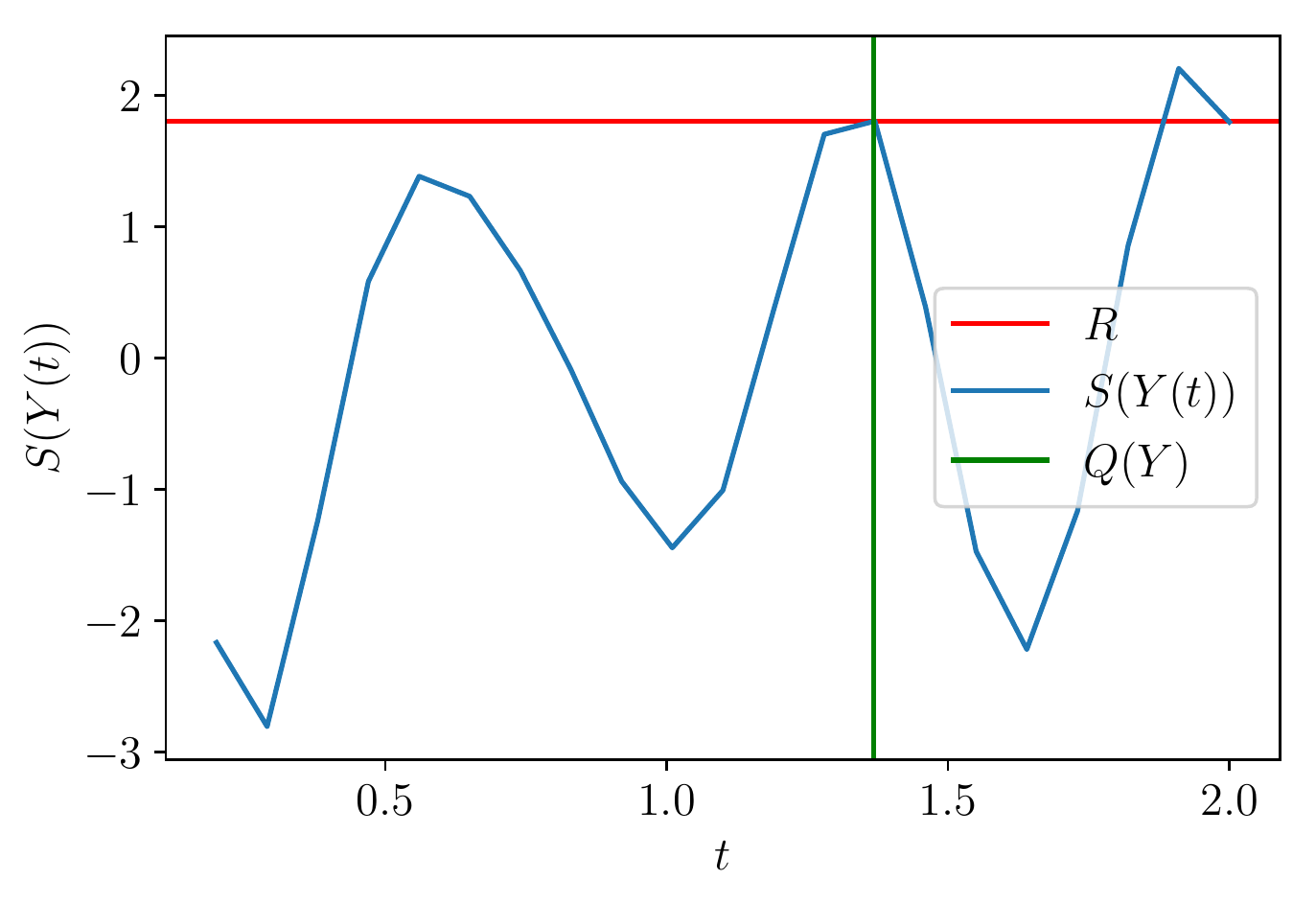} \label{fig:shiftosc_CN}}
    \caption{}
\end{figure}

\begin{table}[h!]
\centering
  \caption{Results of the different methods on example in \S \ref{sec:example_harmonic_interval} using cG(1) with 40 elements.}
  \begin{tabular}{||l||c||c|c|c||c|c|c|c||}
  \hline
  Method & $t_c$ & $t_{LL}$ & $t_L$ & $t_R$ & $e_Q$ & $\eta$ & $\rho_{\rm eff}$ & $n_{\rm adj}$ \\
  \hline
  Taylor series & 1.2637&--   &--  &--  & -7.887e-03 & -8.623e-03 & 1.093 & 2 \\
  Secant        &1.2637 &  --& 1.235 & 1.37 & -7.887e-03 & -7.887e-03 & 0.999 & 8 \\
  Inverse quad. & 1.2637& 1.19  & 1.235 & 1.37 & -7.887e-03 & -7.887e-03 & 0.999 & 9 \\
  \hline
  \end{tabular}
  \label{tab:shiftosc_CG}
\end{table}

\begin{table}[h!]
\centering
  \caption{Results of the different methods on example in \S \ref{sec:example_harmonic_interval} using Crank-Nicolson with 21 nodes.}
  \begin{tabular}{||l||c||c|c|c||c|c|c|c||}
  \hline
  Method & $t_c$ & $t_{LL}$ & $t_L$ & $t_R$ & $e_Q$ & $\eta$ & $\rho_{\rm eff}$ & $n_{\rm adj}$ \\
  \hline
  Taylor series & 1.3674& -- & -- & -- & -1.116e-01 & -1.542e-02 & 0.138 & 2 \\
  Secant method & 1.3674& -- & 1.28 & 1.37 & -1.116e-01 & -1.746e-02 & 0.156 & 8 \\
  Inverse quad. & 1.3674& 1.19 & 1.28 & 1.37 & -1.116e-01 & -1.746e-02 & 0.156 & 10 \\
  \hline
  \end{tabular}
  \label{tab:shiftosc_CN}
\end{table}

\subsubsection{Harmonic oscillator: Effect of the choice of $R$}
\label{sec:example_harmonic_R}

Again consider the harmonic oscillator of \S \ref{sec:example_harmonic_interval}, and estimate the error of the QoI \eqref{eq:QoI} with several different values of $R$, increasing $R$ until it is very close to the maximum of the true data. The maximum value of the true data is approximately 2.05015. Results are provided in Tables \ref{tab:lim_ex1_1}, \ref{tab:lim_ex1_2} and \ref{tab:lim_ex1_3} for increasingly fine finite element meshes.  The tables contain the effectivity ratios, $\rho_{\rm eff}$, for each method and each value of R.

Notice that the iterative methods appear to be more sensitive to the accuracy of the numerical solution than the Taylor series method. In extreme cases, the iterative methods fail to converge. This occurs when a root-finding iteration falls outside of the domain of the IVP \eqref{eq:ivp}, i.e., if $x_n$ the approximation to the root at the $n$th iteration, $x_n < 0$ or $x_n > T$. As the number of finite elements used to solve the ODE increases, the two iterative methods eventually recover their accuracy even when the threshold value is very close to an extremum. For the cases where the iterative methods are inaccurate, note that the root-finding schemes do \textit{not} converge to the true QoI. Instead, the convergence is to the \textit{second} occurrence of the event rather than the first (see Figure \ref{fig:iter_issues}).

The estimate derived from Taylor's theorem is generally more accurate for the less accurate numerical solutions, However, even when using an accurate numerical solution, \textcolor{black}{the Taylor} series approach becomes inaccurate when the curvature of $S$ as a function of $t$ is large near the threshold value. The remainder $\mathcal{R}_1(t_t,t_c)$, given by \eqref{eq:Taylor_remain1}, is one half of the second derivative of $S$ with respect to $t$ at some point between  $t_t$ and $t_c$. As the threshold value $R$ moves closer to the local maximum, this derivative grows and the assumption that  $\mathcal{R}_1(t_t,t_c)$ is small is no longer valid, resulting in an inaccurate estimate.
The iterative methods do not depend on the values of the second derivative of the solution and those methods are able to produce accurate error estimates provided the numerical solution is sufficiently accurate near the event.

\begin{figure}[h!]
    \centering
    \includegraphics[width=7.5cm]{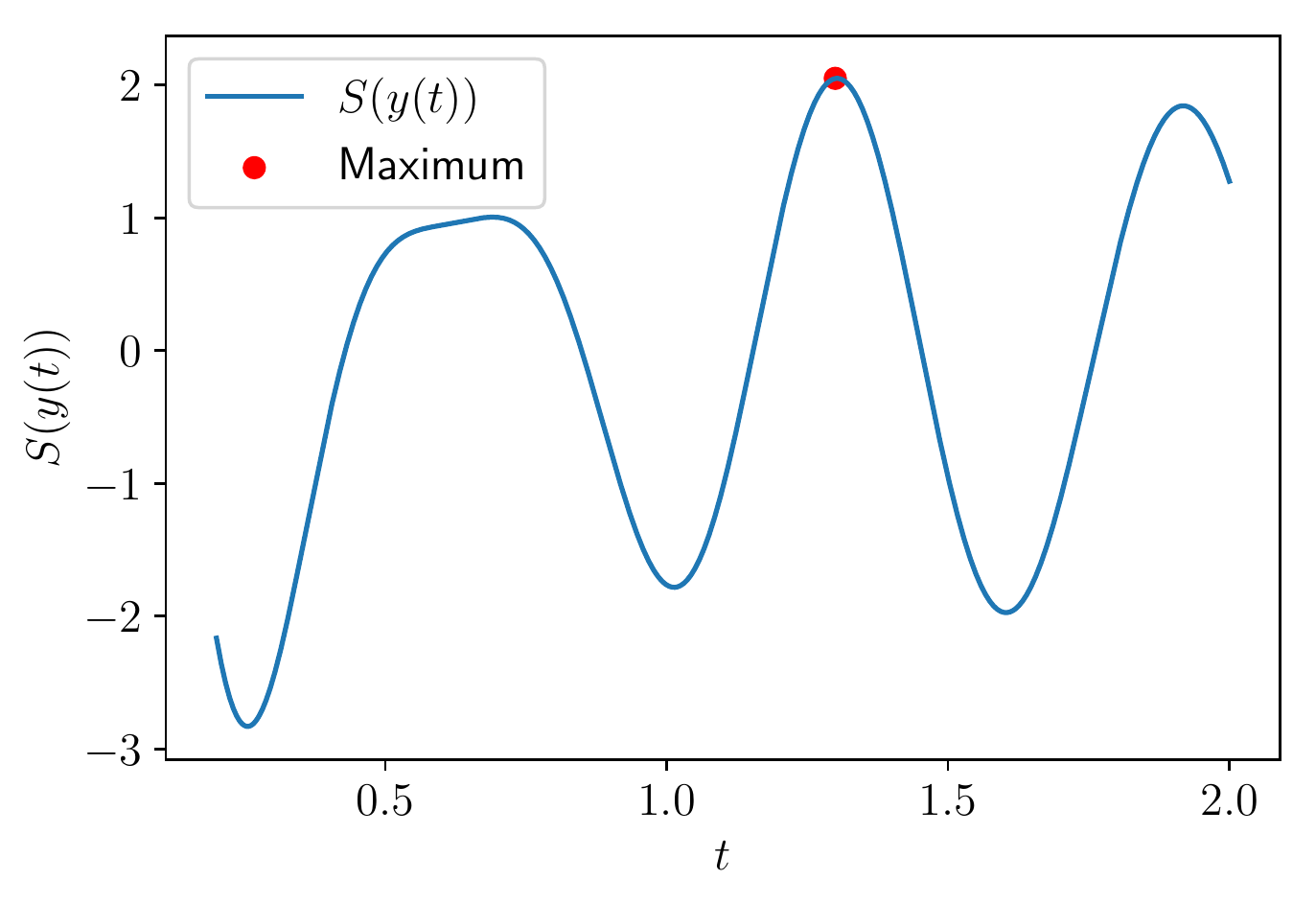}
    \caption{True data for example in \S \ref{sec:example_harmonic_interval}, showing max value of $\approx$ 2.05015.}
    \label{fig:osc_max}
\end{figure}

\begin{table}[h!]
\centering
  \caption{Effectivity ratio for the different methods for varying values of $R$ on example in \S \ref{sec:example_harmonic} using cG(1) with 40 elements.}
  \begin{tabular}{||c|c|c|c|c|c|c|c||}
  \hline
  Method & R=1.95 & R=2.0 & R=2.01 & R=2.02 & R=2.03 & R=2.04 & R=2.05 \\
  \hline
  Taylor series & 1.061 &   1.095 &  1.251 &  1.603 &  3.470 & -1.137 & 0.427 \\
  Secant method & 0.999 & -11.305 & -4.952 & -2.650 & -1.405 &  1.000 & fail  \\
  Inverse quad. & 0.999 & -11.305 & -4.952 & -2.650 & -1.405 &  fail  & fail  \\
  \hline
  \end{tabular}
  \label{tab:lim_ex1_1}
\end{table}

\begin{table}[h!]
\centering
  \caption{Effectivity ratio for  the different methods for varying values of $R$ on example in \S \ref{sec:example_harmonic} using cG(1) with 60 elements.}
  \begin{tabular}{||c|c|c|c|c|c|c|c||}
  \hline
  Method & R=1.95 & R=2.0 & R=2.01 & R=2.02 & R=2.03 & R=2.04 & R=2.05 \\
  \hline
  Taylor series &  1.033 & 0.999 & 1.043 & 1.100 &  1.179 &  1.283 & 0.758 \\
  Secant        &  1.000 & 0.999 & 0.999 & 0.999 & -6.545 & -4.520 & 3.133 \\
  Inverse quad. &  1.000 & 0.999 & 0.999 & 0.999 & -6.545 & -4.520 & 3.133 \\
  \hline
  \end{tabular}
  \label{tab:lim_ex1_2}
\end{table}

\begin{table}[h!]
\centering
  \caption{Effectivity ratio for  the different methods for varying values of $R$ on example in \S \ref{sec:example_harmonic} using cG(1) with 100 elements.}
  \begin{tabular}{||c|c|c|c|c|c|c|c||}
  \hline
	Method & R=1.95 & R=2.0 & R=2.01 & R=2.02 & R=2.03 & R=2.04 & R=2.05 \\
    \hline
    Taylor series &  1.017 & 1.001 & 1.019 & 1.100 & 1.039 & 0.998 & 0.588 \\
    Secant        &  0.999 & 0.999 & 0.999 & 1.000 & 0.999 & 0.999 & 0.999 \\
    Inverse quad. &  0.999 & 0.999 & 0.999 & 1.000 & 0.999 & 0.999 & 0.999 \\
  \hline
  \end{tabular}
  \label{tab:lim_ex1_3}
\end{table}

\subsection{One dimensional heat equation}
\label{sec:example_heat}

We consider the one dimensional heat equation with boundary and initial conditions
\begin{equation*}\label{heat}
\begin{aligned}
&u_t(x,t) = u_{xx}(x,t)+\textcolor{black}{3e^t\sin(\pi x)}, \quad (x,t) \in (0,1) \times (0,1], \\
&u(x,0) = 0, \quad x \in (0,1), \\
&u(0,t) = 0, \; u(1,t) = 0, \quad t \in (0,1].
\end{aligned}
\end{equation*}
This section analyzes the system of ordinary differential equations that arises from a spatial discretization of (\ref{heat}) using a central-difference method. In particular using a uniform partition of the spatial interval $[0,1]$ with 22 nodes:
\begin{equation*}
\{0=x_0<x_1<\dots<x_{21}=1 \}.
\end{equation*}
Since boundary values are specified, this semi-discretization leads to a system of 20 first-order ODEs of the form $\dot{y}(t)=Ay(t)+k(t)$, where $h=\frac{1}{21}$ and

\begin{equation*}
A = \frac{1}{h^2}
\begin{pmatrix}
-2 & 1 & 0 & \cdots & \cdots & 0 \\
1 & -2 & 1 & 0 & \cdots &0 \\
0 & 1 & -2 & 1 & \cdots &0 \\
\vdots & &\ddots &\ddots &\ddots &\vdots & \\
0 & \cdots & 0 & 1 & -2 & 1 \\
0& \cdots & \cdots & 0 & 1 &-2
\end{pmatrix}, \ \ \
k(t)=\textcolor{black}{
\begin{pmatrix}
3e^t\sin(\pi x_1) \\
3e^t\sin(\pi x_2) \\
3e^t\sin(\pi x_3) \\
\vdots \\
3e^t\sin(\pi x_{19}) \\
3e^t\sin(\pi x_{20})
\end{pmatrix}}
\end{equation*}
Since this problem will only analyze the semi-discrete system and not the full PDE, a reference solution is obtained using an accurate time-integrator (SciPy's solve\_ivp) using an absolute tolerance of $10^{-15}$. Let $R=0.33$ and $S(y(t))=\frac{1}{20}\sum_{i=1}^{20}y_i(t)$ in order to analyze the discrete average of the solution over the spatial domain at a time $t$. This library function also has the capability of tracking when specified events occur, which is used to obtain a reference for the true QoI,
\begin{equation*}
t_{t} = 0.5834435609935992.
\end{equation*}
For this problem, the parameters in \eqref{eq:taylor_eta_approx} are
\begin{equation*}
v=\frac{1}{20}(1,1,\dots,1)^\top, \; f(y,t)=Ay+k(t), \; \nabla_yf(y,t)=A.
\end{equation*}
For \eqref{eq:taylor_numerator_estimate}, \eqref{eq:taylor_denominator_estimate}, and \eqref{eq:iterative_est}, set
\begin{equation*}
\psi_1= -\frac{1}{20}(1,1,\dots,1)^\top, \;
\psi_2= \frac{1}{20h^2}(-1,0,\dots,0,-1)^\top, \;
\psi_3=  \frac{1}{20}(1,1,\dots,1)^\top.
\end{equation*}
The  true solution and QoI are shown in Figure \ref{fig:true5} and the results when using cG(1) or Crank-Nicolson methods are shown in Tables \ref{tab:heat_CG} and \ref{tab:heat_CN} respectively. All methods are accurate using either numerical method. The two iterative methods require more adjoint problems to be solved than the Taylor series estimate without any noticeable increase in accuracy.
\begin{figure}[h!]
    \centering
    \subfloat[Chosen value of R, true data $S(y(t)$, and true QoI for example in \S \ref{sec:example_heat}.]{\includegraphics[width=7cm]{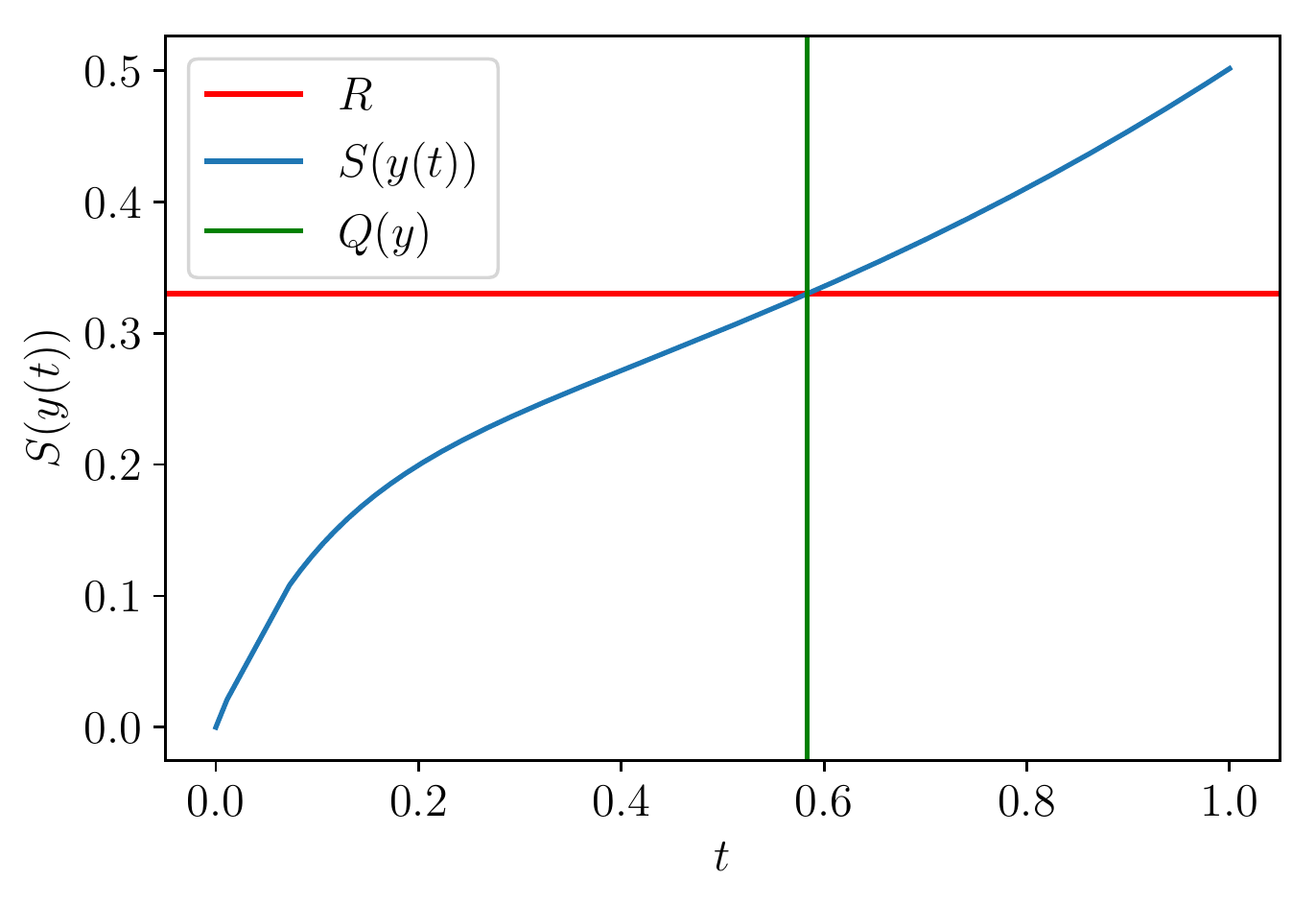} \label{fig:true5}}
    \hfill
    \subfloat[Chosen value of R, true data $S(y(t)$, and true QoI for example in \S \ref{sec:example_two_body}.]{\includegraphics[width=7cm]{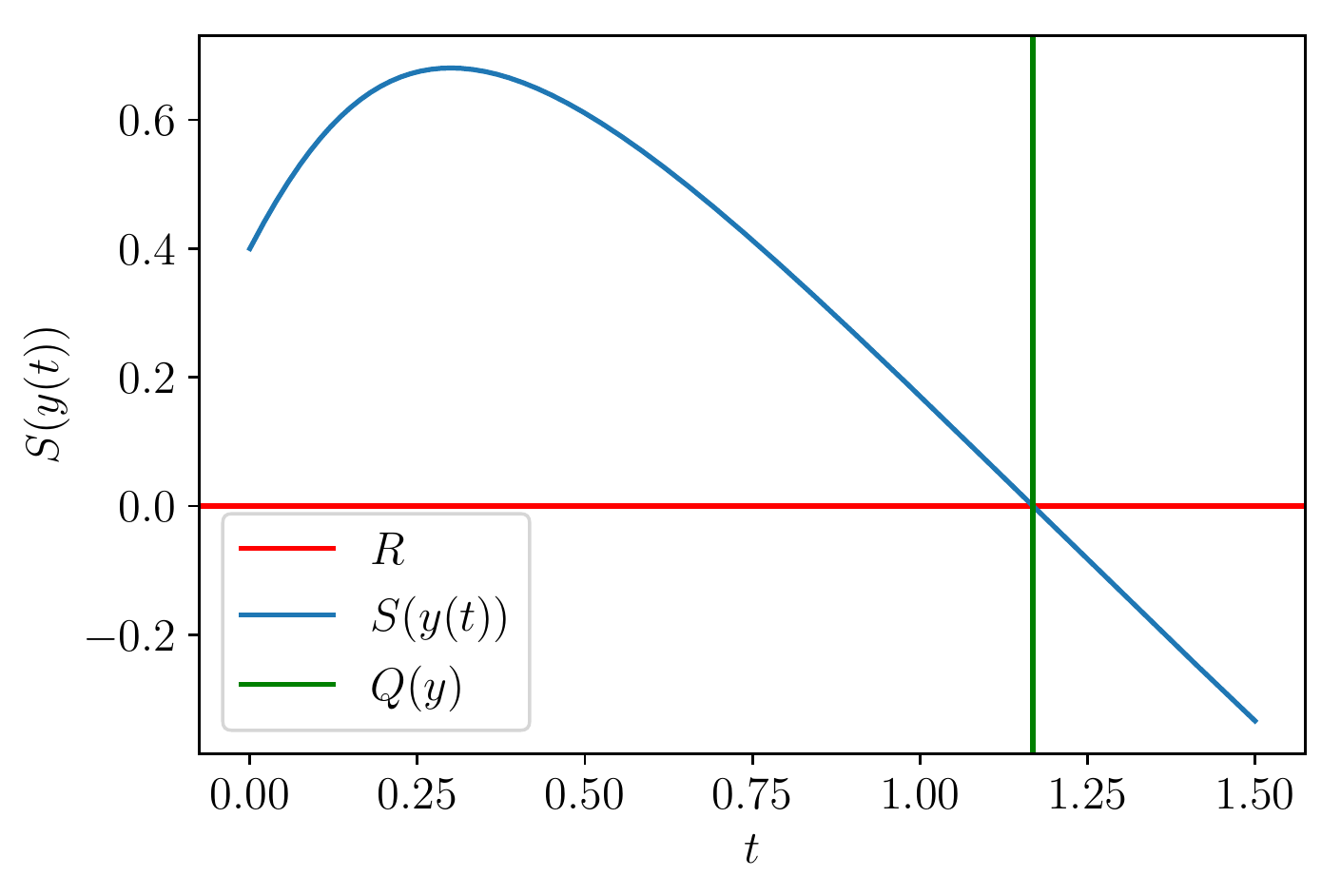} \label{fig:true6}}

    \caption{}

\end{figure}

\begin{table}[h!]
\centering
  \caption{Results of the different methods on the example in \S \ref{sec:example_heat} using cG(1) with 40 elements.}
  \begin{tabular}{||l||c||c|c|c||c|c|c|c||}
  \hline
  Method & $t_c$ & $t_{LL}$ & $t_L$ & $t_R$ & $e_Q$ & $\eta$ & $\rho_{\rm eff}$ & $n_{\rm adj}$ \\
  \hline
  Taylor series & 0.5834 &-- &-- &-- & 6.157e-05 & 6.151e-05 & 0.999 & 2 \\
  Secant        & 0.5834 & -- & 0.575 & 0.6 & 6.157e-05 & 6.150e-05 & 0.999 & 6 \\
  Inverse quad. & 0.5834 & 0.55 & 0.575 & 0.6 & 6.157e-05 & 6.150e-05 & 0.999 & 7 \\
  \hline
  \end{tabular}
  \label{tab:heat_CG}
\end{table}

\begin{table}[h!]
\centering
  \caption{Results of the different methods on the example in \S \ref{sec:example_heat} using Crank-Nicolson with 21 nodes.}
  \begin{tabular}{||l||c||c|c|c||c|c|c|c||}
  \hline
  Method & $t_c$ & $t_{LL}$ & $t_L$ & $t_R$ & $e_Q$ & $\eta$ & $\rho_{\rm eff}$ & $n_{\rm adj}$ \\
  \hline
  Taylor series & 0.5830 & --& --&-- & 4.457e-04 & 4.457e-04 & 1.000 & 2 \\
  Secant        & 0.5830 & -- & 0.55 & 0.6 & 4.457e-04 & 4.456e-04 & 0.999 & 6 \\
  Inverse quad. & 0.5830 & 0.5  & 0.55 & 0.6 & 4.457e-04 & 4.456e-04 & 0.999 & 7 \\
  \hline
  \end{tabular}
  \label{tab:heat_CN}
\end{table}

\subsection{Two body problem}
\label{sec:example_two_body}

We consider the two body problem
\begin{equation*}\label{twobody}
\left.
\begin{gathered}\begin{aligned}
\dot{y}_1 &= y_3, \\
\dot{y}_2 &= y_4, \\
\dot{y}_3 &= \frac{-y_1}{(y_1^2+y_2^2)^{3/2}}, \\
\dot{y}_4 &= \frac{-y_2}{(y_1^2+y_2^2)^{3/2}},
\end{aligned}\end{gathered}
\; \right\}
\quad t \in (0,1.5], \quad y(0)=(0.4, 0, 0, 2.0)^\top,
\end{equation*}
which models a small body orbiting a much larger body in two dimensions. Here $y_1, y_2$ are the spatial coordinates of the orbiting body relative to the larger body, and $y_3, y_4$ are the respective velocities. The initial conditions are chosen so that the analytic solution is \cite{CET2015}
\begin{equation*}
    y = \left( \cos(\tau)-0.6 , 0.8\sin(\tau), \frac{-\sin(\tau)}{1-0.6\cos(\tau)},
               \frac{0.8\cos(\tau)}{1-0.6\cos(\tau)} \right)^\top,
\end{equation*}
where $\tau$ solves $\tau - 0.6\sin(\tau) = t$. Let $R=0$ and $S(y(t))=y_1(t)+y_2(t)$. The true QoI can be found exactly:
\begin{equation*}
    t_t=Q(y)=\cos^{-1}( (15-16\sqrt{2})/41 )
             -0.6\sin\big( \cos^{-1}( (15-16\sqrt{2})/41 ) \big).
\end{equation*}
The values needed to compute \eqref{eq:taylor_eta_approx} are
\begin{equation*}
v= (1,1,0,0)^\top, \;
f(y,t)= \left(y_3, y_4, \frac{-y_1}{(y_1^2+y_2^2)^{3/2}}, \frac{-y_2}{(y_1^2+y_2^2)^{3/2}} \right)^\top,
\end{equation*}
and
\begin{equation*}
\nabla_y f(y,t) =
\begin{pmatrix}
0 & 0 & 1 & 0 \\
0 & 0 & 0 & 1 \\
\frac{2y_1^2-y_2}{(y_1^2+y_2^2)^{5/2}} & \frac{3y_1y_2}{(y_1^2+y_2^2)^{5/2}}    & 0 & 0 \\
\frac{3y_1y_2}{(y_1^2+y_2^2)^{5/2}}    & \frac{2y_1^2-y_2}{(y_1^2+y_2^2)^{5/2}} & 0 & 0 \\
\end{pmatrix}.
\end{equation*}
For  \eqref{eq:taylor_numerator_estimate}, \eqref{eq:taylor_denominator_estimate}, and \eqref{eq:iterative_est}, the data needed are
\begin{equation*}
\psi_1= (-1,-1,0,0)^\top, \;
\psi_2= (0,0,1,1)^\top \;
\psi_3= (1,1,0,0)^\top.
\end{equation*}
The true data $S(y(t))$ and QoI are shown in Figure \ref{fig:true6} and the results using the cG(1) and Crank-Nicolson method appear in Tables \ref{tab:twobod_CG} and \ref{tab:twobod_CN} respectively. All methods have larger error than in other examples so far, probably as a result of the non-linear nature of \eqref{twobody}. However the error estimates are still accurate using either numerical method; each with an effectivity ratio close to one.
\begin{table}[h!]
\centering
  \caption{Results of the different methods on the example in \S \ref{sec:example_two_body} using cG(1) with 40 elements.}
  \begin{tabular}{||l||c||c|c|c||c|c|c|c||}
  \hline
  Method & $t_c$ & $t_{LL}$ & $t_L$ & $t_R$ & $e_Q$ & $\eta$ & $\rho_{\rm eff}$ & $n_{\rm adj}$ \\
  \hline
  Taylor series & 1.1601 &-- & --&-- & 8.262e-03 & 8.287e-03 & 1.003 & 2 \\
  Secant method & 1.1601 & --& 1.125 & 1.1625 & 8.262e-03 & 8.287e-03 & 1.003 & 5 \\
  Inverse quad. & 1.1601 & 1.0875 & 1.125 & 1.1625 & 8.262e-03 & 8.287e-03 & 1.003 & 6 \\
  \hline
  \end{tabular}
  \label{tab:twobod_CG}
\end{table}

\begin{table}[h!]
\centering
  \caption{Results of the different methods on the example in \S \ref{sec:example_two_body} using Crank-Nicolson with 21 nodes.}
  \begin{tabular}{||l||c||c|c|c||c|c|c|c||}
  \hline
  Method & $t_c$ & $t_{LL}$ & $t_L$ & $t_R$ & $e_Q$ & $\eta$ & $\rho_{\rm eff}$ & $n_{\rm adj}$ \\
  \hline
  Taylor series & 1.2091 &-- & --& --& -4.068e-02 & -4.078e-02 & 1.002 & 2 \\
  Secant        & 1.2091 & --& 1.2 & 1.275 & -4.068e-02 & -4.077e-02 & 1.002 & 5 \\
  Inverse quad. & 1.2091 & 1.125 & 1.2 & 1.275 & -4.068e-02 & -4.077e-02 & 1.002 & 6 \\
  \hline
  \end{tabular}
  \label{tab:twobod_CN}
\end{table}
\newpage

\subsection{Logistic Equation}
\label{sec:example_log}
Consider the Logistic equation

\begin{equation}
    \dot{\textcolor{black}{y}} = k\textcolor{black}{y} \left( 1-\frac{\textcolor{black}{y}}{K} \right), \ t \in (0,20], \ \ \textcolor{black}{y}(0)=\frac{1}{2},
\end{equation}
where $k=0.25$ and $K=1$. The analytic solution is,
\begin{equation}
    \textcolor{black}{y}(t)= \frac{K \, \textcolor{black}{y}(0)}{\textcolor{black}{y}(0)+(K-\textcolor{black}{y}(0))e^{-kt}} = \frac{1}{1+3e^{-0.25t}}.
\end{equation}
Let $S(\textcolor{black}{y}(t)) = \textcolor{black}{y}(t)$ and consider several threshold values, $R \in \{0.55,0.8,0.9,\textcolor{black}{0.94},0.98,0.99,0.995 \}$. The values needed for \eqref{eq:taylor_eta_approx} are
\begin{equation*}
v=1, \; f(y,t)=ky(1-\frac{y}{k}), \; \nabla_y f(y,t)=k-\frac{2k}{K}y,
\end{equation*}
so the data needed for \eqref{eq:taylor_numerator_estimate}, \eqref{eq:taylor_denominator_estimate}, and \eqref{eq:iterative_est} are
\begin{equation*}
\psi_1= -1, \; \psi_2=k-\frac{2k}{K}R, \; \psi_3= 1.
\end{equation*}
The numerical solution is computed using the cG(1) method with five elements.
Figure \ref{fig:log_true} shows the true functional and QoI for a chosen threshold value. Table \ref{tab:logistic} shows the true error in the QoI and the effectivity ratio for each method as the threshold value increases. As the error in the QoI increases, the Taylor series method loses accuracy, presumably since the remainder terms are no longer negligible, despite the fact the second derivatives with respect to $t$ are small. However, the iterative methods are accurate even when the true error is large.

\begin{table}[h!]
\centering
  \caption{Error in QoI and effectivity ratio of the different methods for varying values of $R$ on example in \S \ref{sec:example_log} using cG(1) with 5 elements.}
  \begin{tabular}{||c|c|c|c|c|c|c|c||}
  \hline
    & R=0.55 & R=0.8 & R=0.9 & R=0.94 & R=0.98 & R=0.99 & R=0.995 \\
  \hline
  $e_Q$         & -0.090 & -0.117 & -0.166 & 0.194 & 0.829 & 0.610 & 1.513 \\
  Taylor series &  1.001 & 1.021 & 1.041 & 0.957 & 0.902 & 0.919 & 0.830 \\
  Secant        &  0.999 & 0.987 & 0.977 & 1.023 & 1.007 & 1.011 & 1.005 \\
  Inverse quad. &  0.999 & 0.987 & 0.977 & 1.023 & 1.007 & 1.011 & 1.005 \\
  \hline
  \end{tabular}
  \label{tab:logistic}
\end{table}

\begin{figure}[h!]
    \centering
    \includegraphics[width=7cm]{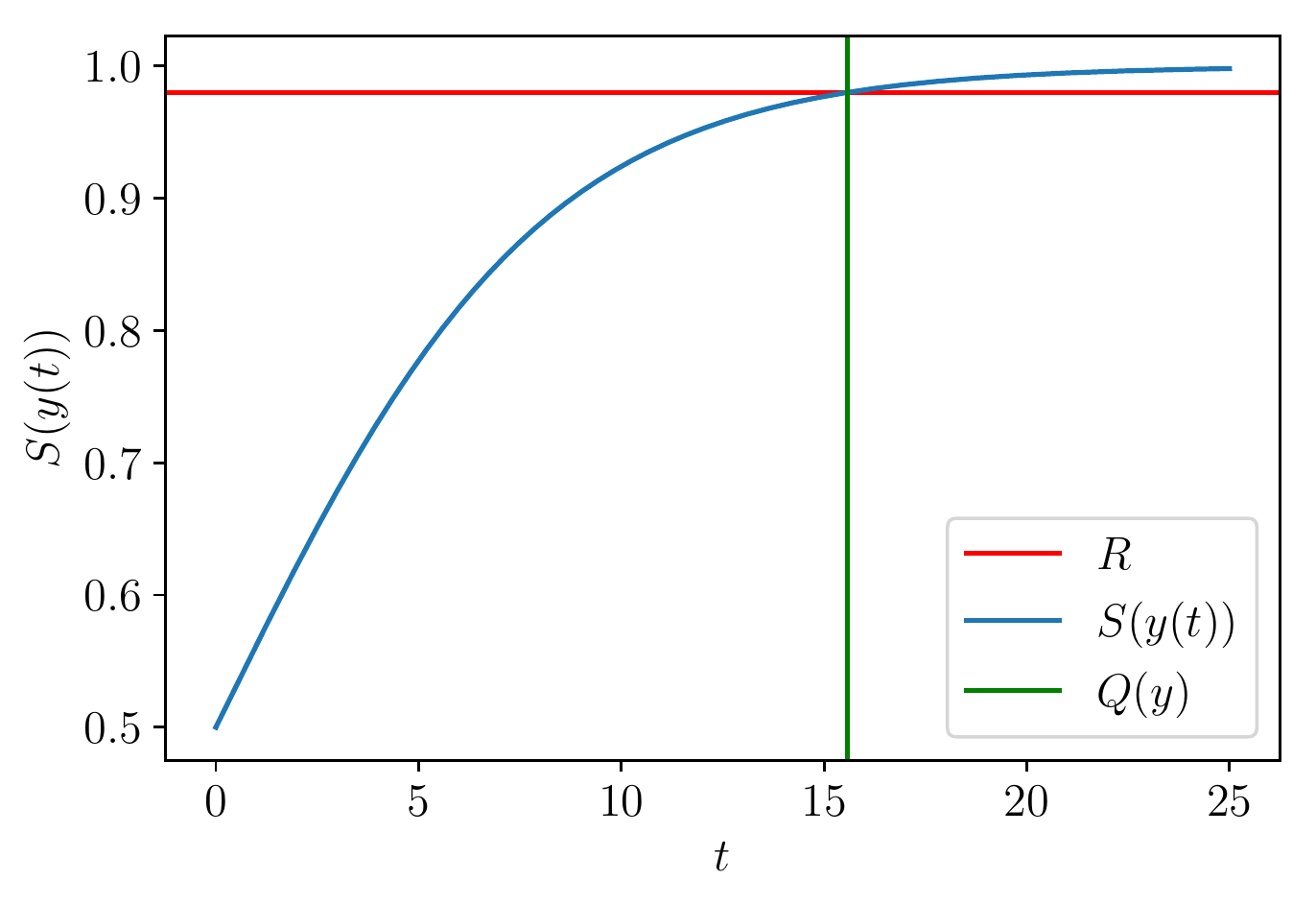}
    \caption{True values of functional and QoI for example in \S \ref{sec:example_log}, when $R=0.94$}
    \label{fig:log_true}
\end{figure}

\subsection{Conclusions for deterministic examples}
\label{sec:determ_conclusion}
Both the Taylor series and the root-finding approaches provide accurate error estimates in most cases. Some limitations of these methods were revealed in \S \ref{sec:example_harmonic_interval} and \S \ref{sec:example_harmonic_R}. The poor results in \S \ref{sec:example_harmonic_interval} are caused by the use of a low accuracy solution and the fact that computed QoI was closer to the second time the threshold value was crossed than the first. In section \ref{sec:example_harmonic_R}, specifically Tables \ref{tab:lim_ex1_1}, \ref{tab:lim_ex1_2} and \ref{tab:lim_ex1_3}, we observed that the issue that arose in \S \ref{sec:example_harmonic_interval} can be remedied by using a numerical solution that is more accurate near the QoI. Although another issue is revealed in the final column of Table \ref{tab:lim_ex1_3}, where the Taylor series approach gives poor results even though the numerical solution is quite accurate. In that instance the poor result is due to the assumption that terms involving the second derivative of $S(t)$ with respect to $t$ can be neglected. The example in \S \ref{sec:example_log} shows that the Taylor series approach may not be accurate if the error in the QoI is large, but the iterative methods are accurate provided the root finding technique finds the correct root.

\section{Numerical examples for error in the CDF of the non-standard QoI}
\label{sec:uncertainty}

The techniques outlined in \S \ref{sec:error_cdf} are applied to some examples below. The error bound \eqref{CDFerr} relies on accurate error estimates for the non-standard QoI. In the numerical examples, the estimates for the error in each sample value had an effectivity ratio close to one.

\subsection{Harmonic oscillator}
\label{sec:UQ_harmonic}

Reconsider the harmonic oscillator from \S \ref{sec:example_harmonic} this time with parameters $k$ and $m$ as random variables:
\begin{equation*}
\begin{pmatrix}
\dot{y_1}(t) \\  \dot{y_2}(t)
\end{pmatrix}
+
\begin{pmatrix}
0    & -1 \\
k/m  & 1/m
\end{pmatrix}
\begin{pmatrix}
y_1(t) \\  y_2(t)
\end{pmatrix}
=
\begin{pmatrix}
0 \\  50/m * \cos(10 t)
\end{pmatrix},
\ t \in (0,2],
\end{equation*}
with initial conditions $(y_1(0),y_2(0))=(5,0)$. Let $k$ have a normal distribution with mean 50 and \textcolor{black}{a standard deviation of} 5 and $m$ \textcolor{black}{be uniformly distributed over $[.125,.325]$}. For the QoI, set choose $R=-1$ and $S(y(t))=y_1(t)$. With $\varepsilon =\textcolor{black}{0.05}$ in \eqref{CDFerr}, the nominal CDF \eqref{nomCDF} is computed using the true solution given in \cite{Barger} with $1000$ samples. The numerical solution is obtained using cG(1) with 40 elements and the approximate CDF \eqref{approxCDF} is computed with $N=100$ samples. \textcolor{black}{Both sources contribute to the error, with the sampling error being slightly more dominant}. The computed bound is indeed larger than the actual error in the distribution. Both the bound and the error peak near the inflection point of the CDF, with the error bound being about \textcolor{black}{six} times larger than the true error.

\begin{figure}[h!]
    \centering
    \subfloat[Nominal CDF using 1000 samples and computed CDF using 100 samples for example in \S \ref{sec:UQ_harmonic}.]{\includegraphics[width=7cm]{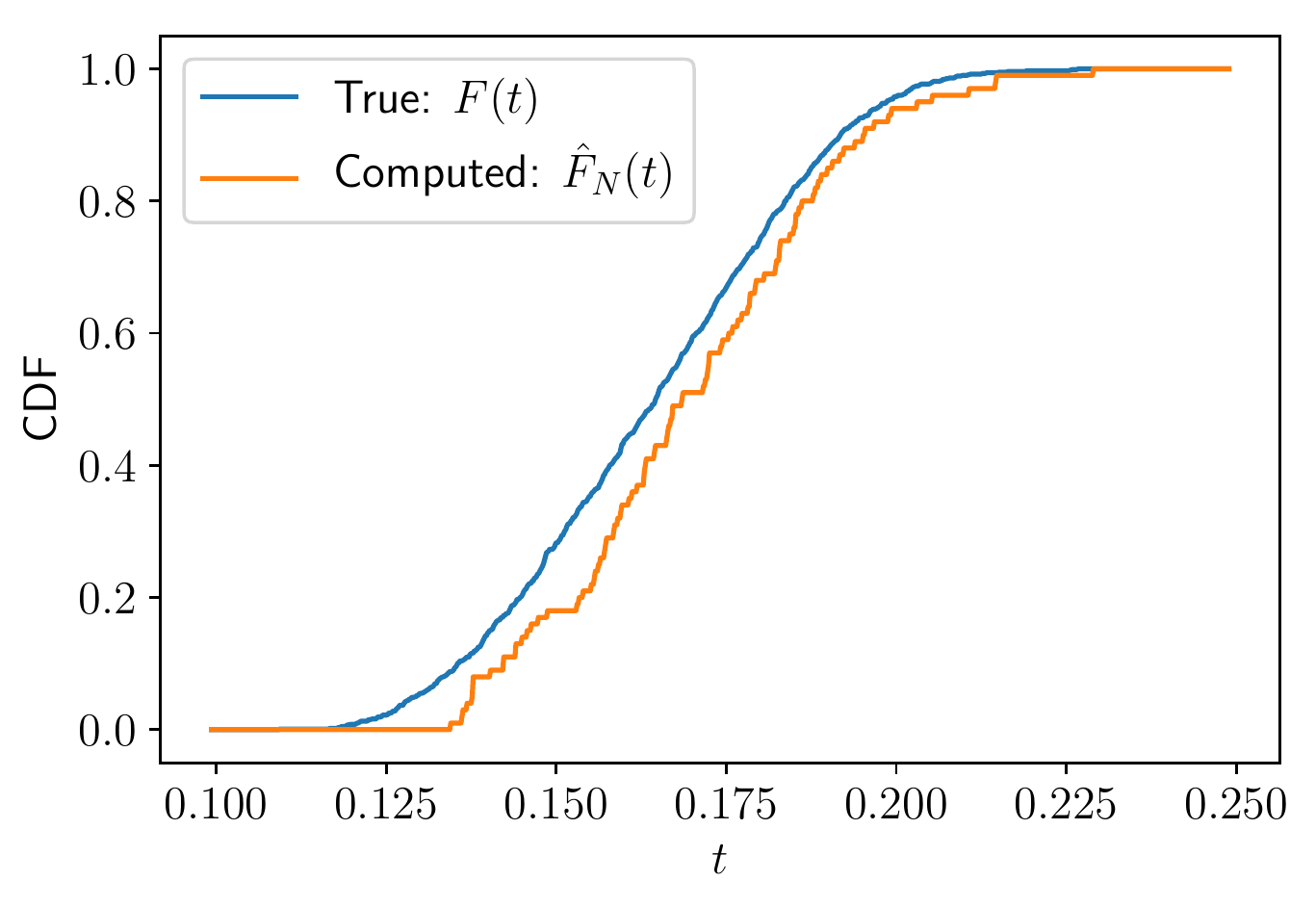} \label{fig:oscCDF}}
    \hfill
    \subfloat[Comparing nominal CDF using 1000 samples to computed CDF using 80 samples for example in \S \ref{sec:UQLorenz}.]{\includegraphics[width=7cm]{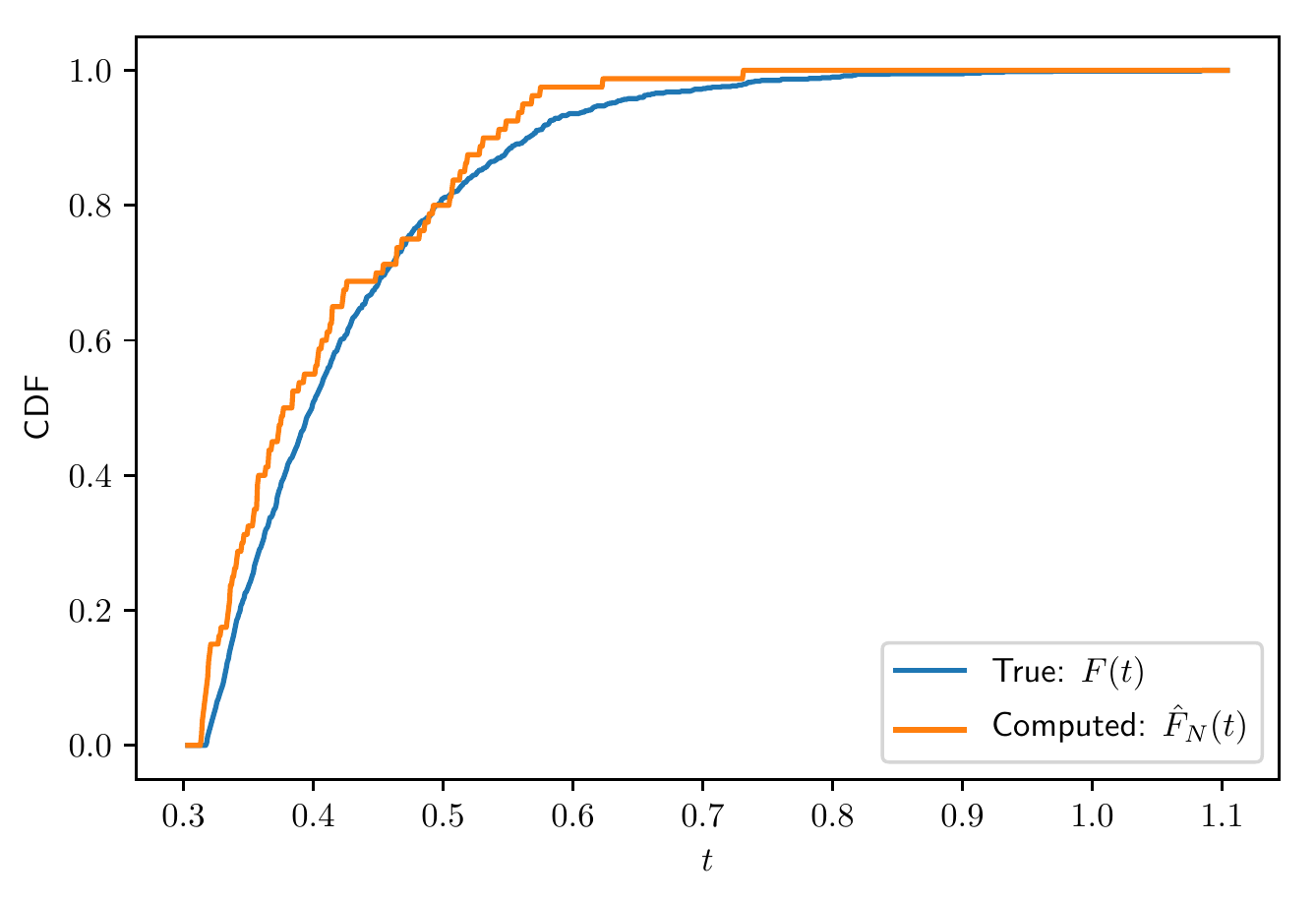} \label{fig:LorenzCDF}}

    \caption{ }

\end{figure}

\begin{figure}[h!]
    \centering
    \subfloat[Comparing computed error bound \eqref{CDFerr} to true error for the problem in \S \ref{sec:UQ_harmonic} when using 1000 samples for the nominal CDF and 100 samples for the numerical CDF.]{\includegraphics[width=7cm]{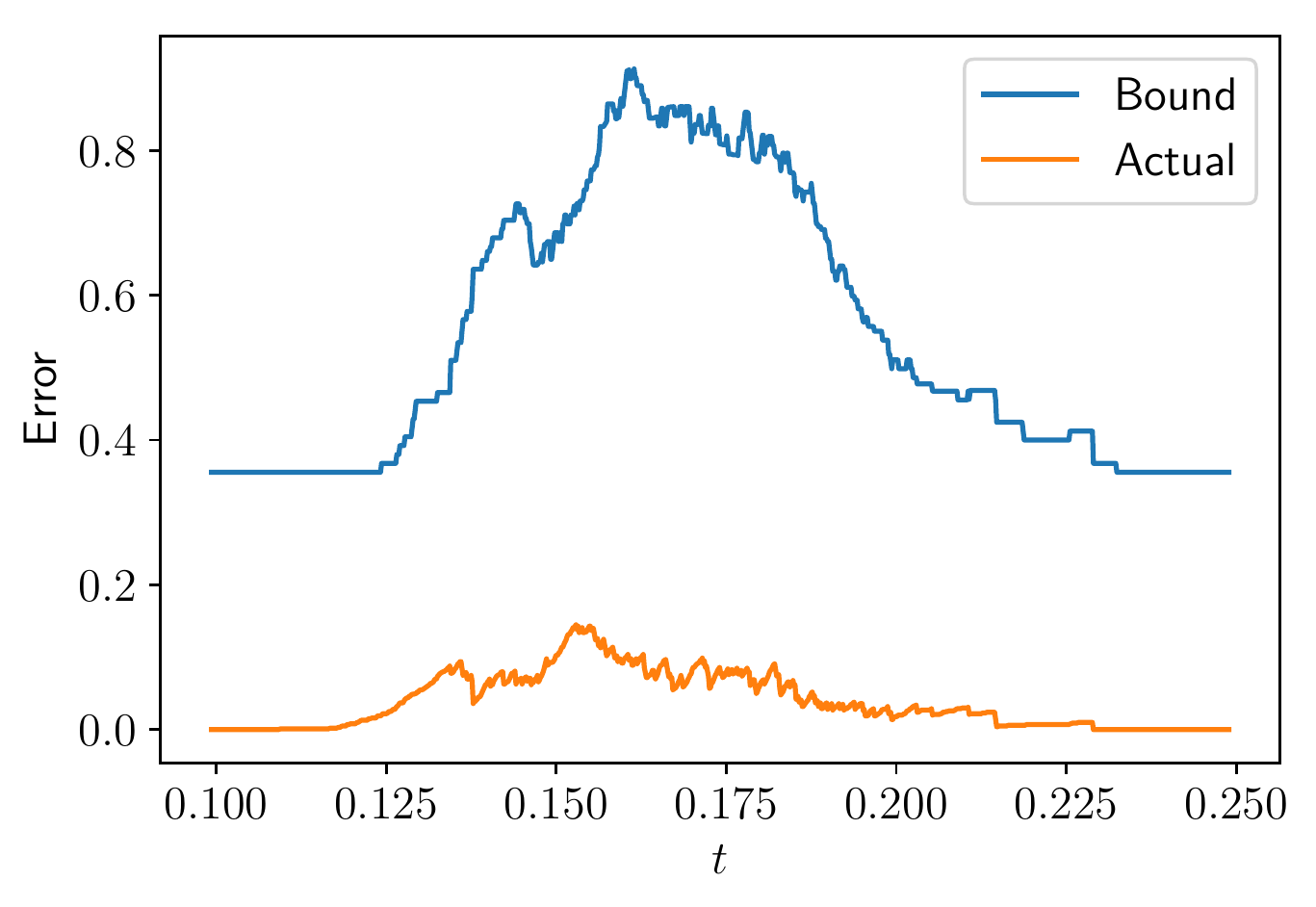}}
    \hfill
    \subfloat[Breaking the error bound into sampling and discretization contributions for the problem in \S \ref{sec:UQ_harmonic} when using 100 samples. The sampling and discretization contributions are computed as the first and second terms of \eqref{CDFerr}, respectively.]{\includegraphics[width=7cm]{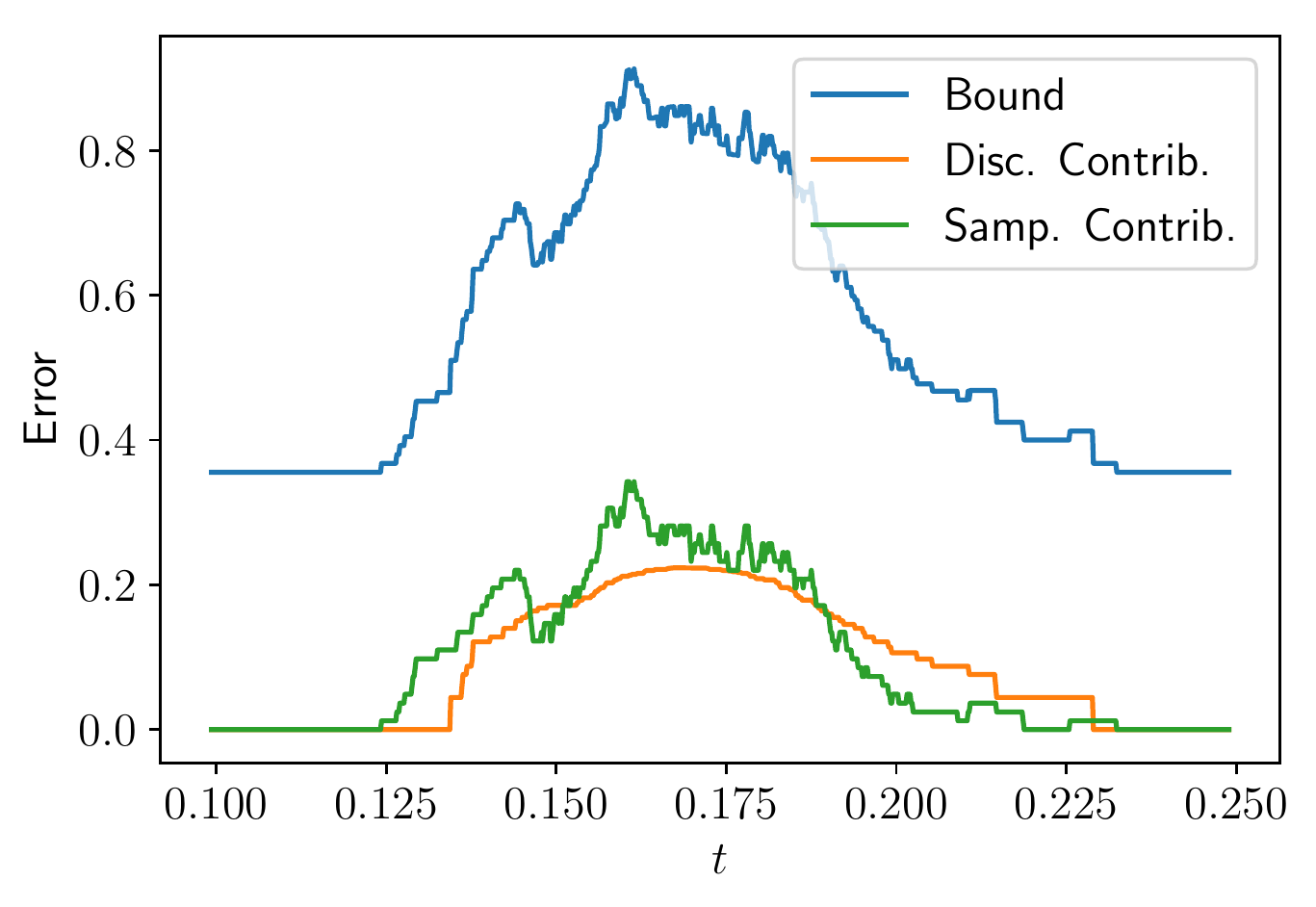}}
    \caption{Error bound for example in \S \ref{sec:UQ_harmonic}.}
    \label{fig:oscbounds}
\end{figure}

\subsection{Lorenz System}
\label{sec:UQLorenz}

Consider the Lorenz system \eqref{eq:Lorenz}, \textcolor{black}{where we let one of the initial conditions be a random variable. More precisely, $y_1(0)=\theta$ is uniformly distributed over the interval $(0,2]$.}
Again let $\sigma = 10$, $r=28$, and $b=\frac{8}{3}$. For the QoI \eqref{eq:QoI}, set $R=3$ and $S(y(t))=y_1(t)$. A reference solution and QoI are obtained using an accurate time-integrator (SciPy's solve\_ivp with event tracker) with an absolute tolerance of $10^{-15}$ and a relative tolerance of $10^{-8}$. This time, the numerical solution is computed using the cG(1) method with 30 elements.

The values needed for equation \eqref{eq:taylor_eta_approx} are
\begin{equation*}
v=(1,0,0)^\top, \; f(y,t)=(\sigma(y_2-y_1), \ r y_1 - y_2 - y_1 y_3, \ y_1 y_2 - b y_3)^\top,
\end{equation*}
and
\begin{equation*}
\nabla_y f(y,t) =
\begin{pmatrix}
    \sigma & -\sigma & 0 \\
    r-y_3 & -1 & -y_1 \\
    -y_2 & y_1 & -b \\
\end{pmatrix}.
\end{equation*}
hence, for \eqref{eq:taylor_numerator_estimate}, \eqref{eq:taylor_denominator_estimate}, and \eqref{eq:iterative_est} the data are
\begin{equation*}
\psi_1= (-1,0,0)^\top,
\psi_2= (-\sigma,\sigma,0)^\top,
\psi_3= (1,0,0)^\top.
\end{equation*}
The bound \eqref{CDFerr} is computed with $\varepsilon=0.05$. The Figure \ref{fig:LorenzCDF} compares the numerical CDF computed using 80 samples to the nominal CDF using 1000 samples. Figure \ref{fig:LorenzCDFerr} shows the discretization and sampling contributions to the calculated error bound. For this example, the discretization is the larger contributor to the error in the CDF, which is likely due to the chaotic nature of the system. As in \S \ref{sec:UQ_harmonic}  the error bound is roughly \textcolor{black}{six} times the true error at its peak.

\begin{figure}[h!]
    \centering
    \subfloat[Comparing error bound \eqref{CDFerr} to true error for example in \S \ref{sec:UQLorenz} when using 1000 samples for the nominal CDF and 80 samples for the numerical CDF.]{\includegraphics[width=7cm]{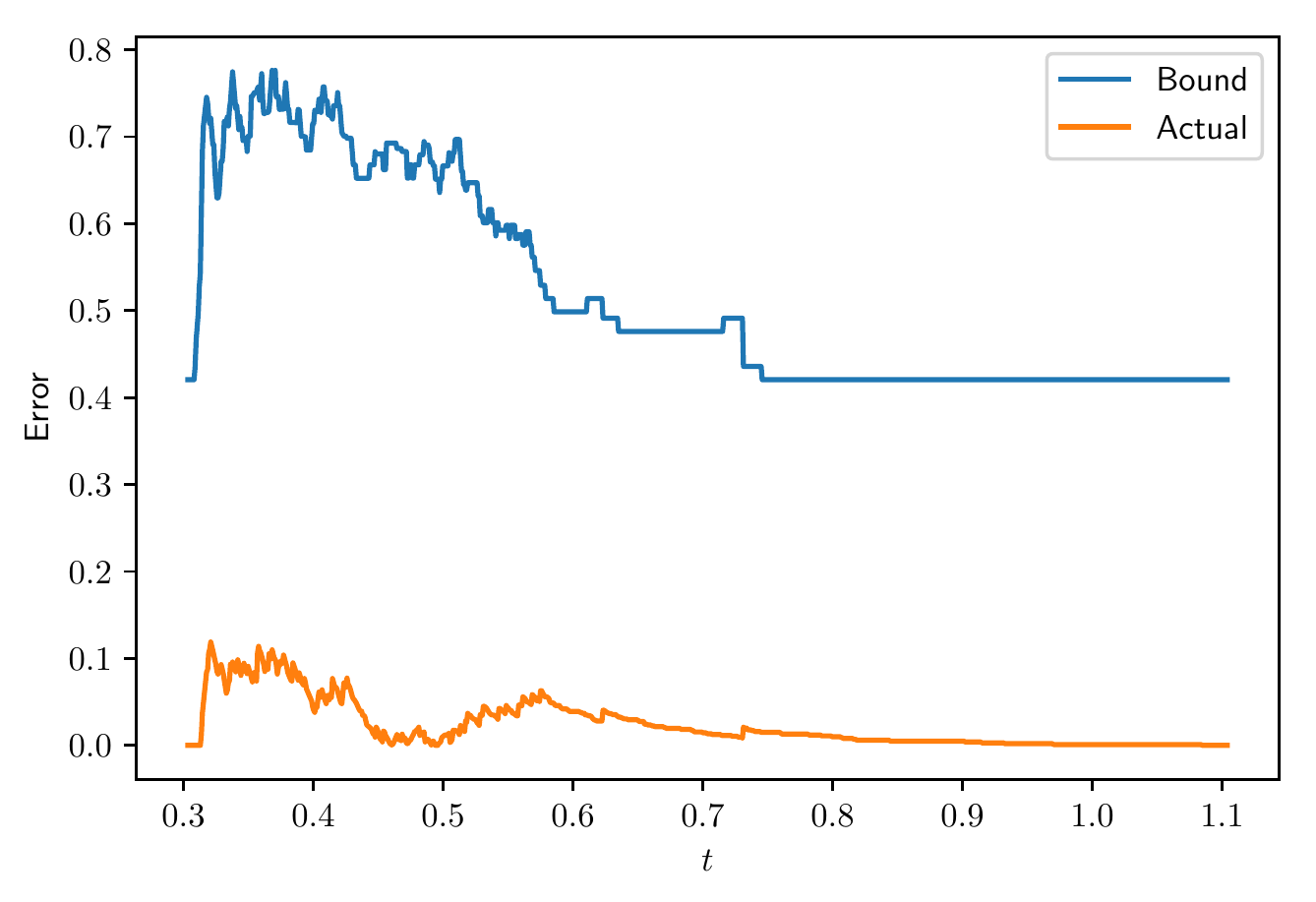}}
    \hfill
    \subfloat[Showing the sampling and discretization contributions to the error bound for the example in \S \ref{sec:UQLorenz} when using 100 samples. The sampling contribution is computed as the first term of \eqref{CDFerr}, while the second term gives the discretization contribution. ]{\includegraphics[width=7cm]{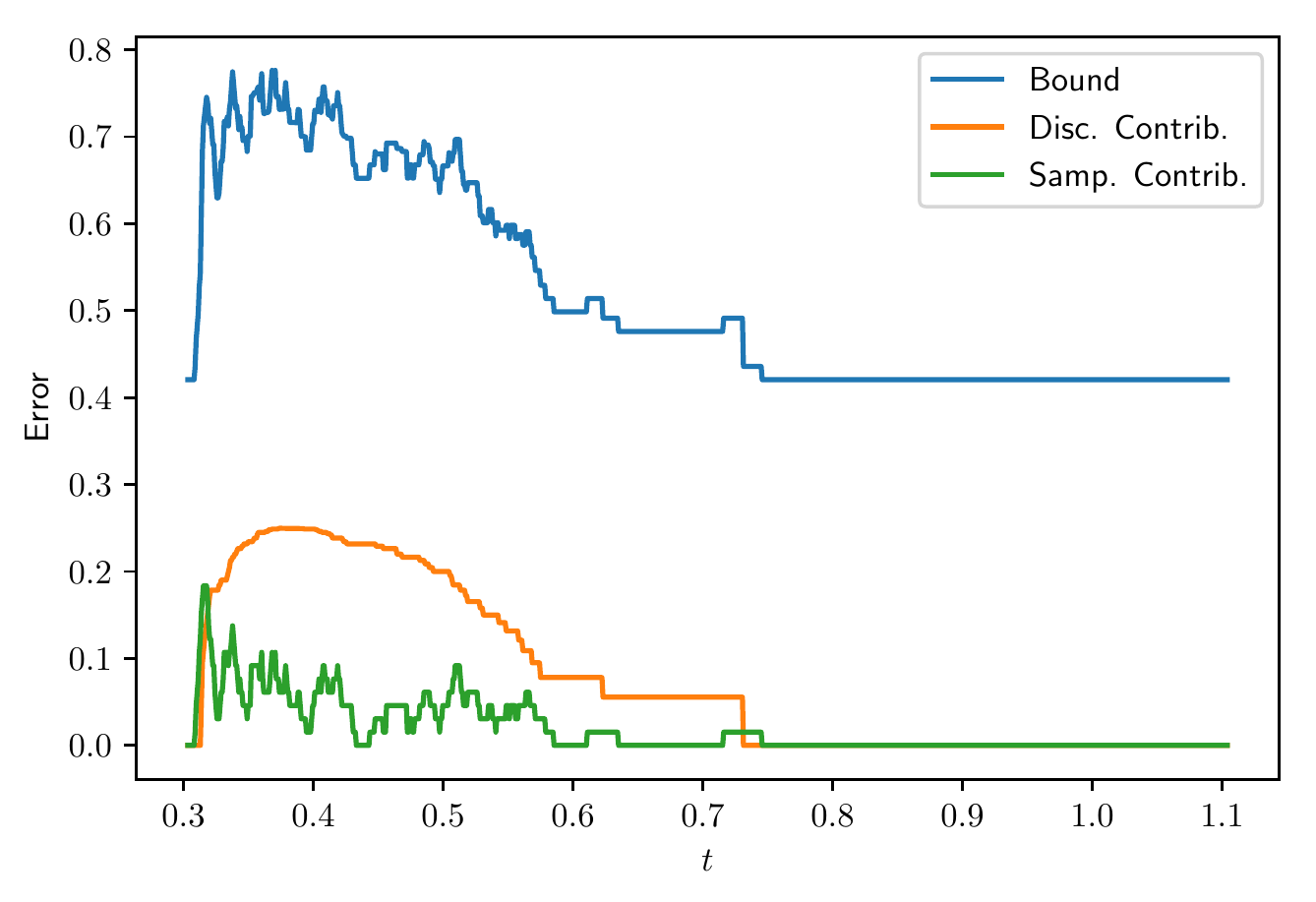}}
    \caption{}
    \label{fig:LorenzCDFerr}
\end{figure}

\section{Conclusions}
\label{sec:conclusions}

We develop two different classes of accurate \emph{a posteriori} error estimates for a QoI that cannot be expressed as a bounded functional of the solution, namely the first time when a given functional $S$ of the solution achieves a specific value. The first method is based on Taylor's Theorem and is accurate whenever the numerical solution is sufficiently accurate and the curvature of the functional $S$ is not too large. Moreover this method is  cost effective, requiring the solution of only two adjoint problems. The second class of methods are based on standard root-finding techniques and are accurate provided the numerical solution is sufficiently accurate near the event of interest. These estimates however are more costly, requiring an adjoint solution per iteration of the root-finding algorithm. Both methods can be used as a basis for determining the discretization contribution to an error bound on a CDF of the functional when one or more of the parameters governing the system of differential equations \textcolor{black}{are} random variables.

\section*{Acknowledgments}
J. Chaudhry’s  and Z. Stevens's work is supported by the NSF-DMS 1720402.
S. Tavener's and D. Estep's work is supported by NSF-DMS 1720473.

\bibliography{ns_qoi}
\bibliographystyle{plain}

\appendix

\section{Theorem \ref{thm:Crank-Nicolson}}
\label{sec:proof_C-N}
\begin{theorem}
\label{thm:Crank-Nicolson}
Numerical solutions obtained via the Crank-Nicolson finite difference scheme are nodally equivalent to solutions obtained using a cG(1) finite element method in which the integrals are evaluated with the trapezoidal rule.
\end{theorem}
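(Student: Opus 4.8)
The plan is to show that the nodal values produced by the cG(1) method with trapezoidal quadrature satisfy exactly the Crank-Nicolson recurrence, and then to conclude equality by induction over the partition nodes. First I would restrict the variational statement \eqref{cGq_itegral} with $q=1$ to a single interval $I_n = [t_n,t_{n+1}]$. The two key structural facts are that the trial space $\mathcal{V}^1$ consists of continuous piecewise-linear functions, so $Y|_{I_n}$ is affine and $\dot Y$ is constant on $I_n$, whereas the test space $\mathcal{P}^0(I_n)$ consists of constant vectors. Taking $v$ to be an arbitrary constant vector, the left-hand side integrates exactly (no quadrature needed) to $\bigl( Y(t_{n+1}) - Y(t_n) \bigr) \cdot v$.

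Next I would evaluate the right-hand side $\int_{t_n}^{t_{n+1}} f(Y,t)\cdot v \, dt$ with the trapezoidal rule. Because $Y$ is continuous and piecewise linear, its endpoint values are precisely the nodal coefficients $Y_n := Y(t_n)$ and $Y_{n+1} := Y(t_{n+1})$, so the quadrature samples $f$ only at $(Y_n,t_n)$ and $(Y_{n+1},t_{n+1})$ and returns $\tfrac{t_{n+1}-t_n}{2}\bigl[ f(Y_n,t_n) + f(Y_{n+1},t_{n+1}) \bigr]\cdot v$. Since $v$ is an arbitrary constant, equating the two sides componentwise and dividing by $t_{n+1}-t_n$ produces exactly the Crank-Nicolson one-step update $\tfrac{Y_{n+1}-Y_n}{t_{n+1}-t_n} = \tfrac12\bigl[ f(Y_n,t_n) + f(Y_{n+1},t_{n+1}) \bigr]$.

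Finally I would close the argument by induction. Both schemes are initialized with $Y_0 = y_0$, and I have shown that the cG(1)-with-trapezoid nodal values obey the same one-step recurrence as Crank-Nicolson; hence, assuming agreement at $t_n$, the (possibly implicit) update determines the same $Y_{n+1}$ for both, and the nodal values coincide for all $n$. The one point requiring care --- the main obstacle, such as it is --- is the precise reading of ``nodally equivalent'': the cG(1) object is a function on $[0,T]$, whereas Crank-Nicolson returns a sequence, so I must state clearly that the claim concerns the cG(1) solution \emph{sampled at the partition nodes}, and (when $f$ is nonlinear) that any solution of the implicit nodal system is common to both methods, rather than overclaiming uniqueness of that solution.
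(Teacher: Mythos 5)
Your proof is correct and follows essentially the same approach as the paper's: test \eqref{cGq_itegral} against constants, apply the fundamental theorem of calculus on the left, and apply trapezoidal quadrature to the right-hand integral to recover the Crank--Nicolson one-step update. The closing induction and the caveat about the implicit nodal system are sensible additions that the paper leaves implicit, but they do not change the substance of the argument.
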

\begin{proof}
The cG(1) formulation over a sub-interval $(t_n,t_{n+1})$, with the constant test function $v(t) = 1$ is
\begin{equation}\label{integral}
        \int_{t_n}^{t_{n+1}}\dot{y} \; {\rm d}t = \int_{t_n}^{t_{n+1}}f(y,t) \; {\rm d}t.
\end{equation}
Where by the fundamental theorem of calculus
\begin{equation}\label{left}
        \int_{t_n}^{t_{n+1}}\dot{y} \; {\rm d}t = y(t_{n+1})-y(t_n).
\end{equation}
Using the trapezoidal quadrature rule, we obtain
\begin{equation}\label{right}
\int_{t_n}^{t_{n+1}}f(y,t) \; {\rm d}t \approx
    \frac{t_{n+1}-t_n}{2}(f(y(t_{n+1}),t_{n+1})+f(y(t_n),t_n)).
\end{equation}
Substituting \eqref{left} and \eqref{right} into \eqref{integral} results in the Crank-Nicolson scheme.
\end{proof}

\end{document}